\documentclass[reqno,11pt]{amsart}

\usepackage[T1,T2A]{fontenc}
\usepackage[utf8]{inputenc}

\usepackage{latexsym}
\usepackage{amssymb}
\usepackage{amsrefs}
\usepackage{bbm}
\usepackage[mathscr]{euscript}
\usepackage{nicefrac}

\usepackage{xcolor}
\usepackage{graphicx}
\usepackage{adjustbox}
\usepackage{caption}
\usepackage{tabularx}
\usepackage{tikz}
\usepackage{tikzscale}

\usetikzlibrary{
  calc,
  angles,
  quotes,
  positioning,
  automata,
  arrows,
  decorations.pathreplacing,
  fadings,
  3d
}

\usepackage[all]{xy}

\usepackage[normalem]{ulem}
\usepackage{blindtext}
\usepackage{comment}
\usepackage{stackengine}
\usepackage{enumitem}
\usepackage{subfiles}
\usepackage{xparse}

\stackMath

\usepackage{hyperref}

\NewDocumentCommand{\vdotss}{ O{0.35em} O{0.8pt} m }{%
  \begin{tikzpicture}[baseline={(0,-0.5ex)}]
    \pgfmathtruncatemacro{\lastdot}{#3-1}%
    \foreach \i in {0,...,\lastdot}{%
      \fill (0,{-\i*#1}) circle[radius=#2];
    }%
    \path (0,0.3ex) -- (0,{-\lastdot*#1-0.3ex});
  \end{tikzpicture}%
}

\newcommand{\eq}[2]{%
  \begin{equation}
    \label{eq:#1}
    #2
  \end{equation}%
}

\newcommand{\equ}[1]{\eqref{eq:#1}}

\newcommand{\hs}{homogeneous space}

\newcommand{\hd}{Hausdorff dimension}
\newcommand{\ls}{Lagrange spectrum}

\newcommand{\da}{Diophantine approximation}
\newcommand{\di}{Diophantine}

\newcommand{\ehs}{expanding horospherical subgroup}

\newcommand{\ignore}[1]{}

\DeclareMathOperator{\dist}{dist}

\DeclareMathOperator{\GL}{GL}
\DeclareMathOperator{\SL}{SL}

\DeclareMathOperator{\Span}{Span}

\DeclareMathOperator{\diag}{diag}
\DeclareMathOperator{\vol}{vol}

\newcommand{\vp}{\mathbf{p}}
\newcommand{\vq}{\mathbf{q}}
\newcommand{\va}{\mathbf{a}}
\newcommand{\vb}{\mathbf{b}}

\newcommand{\vx}{\mathbf{x}}
\newcommand{\vy}{\mathbf{y}}

\newcommand{\R}{\mathbb{R}}
\newcommand{\Z}{\mathbb{Z}}
\newcommand{\N}{\mathbb{N}}

\newcommand{\nz}{\smallsetminus\{0\}}
\newcommand{\mr}{M_{m,n}(\R)}
\newcommand{\amr}{\ensuremath{{A}\in \mr}}

\theoremstyle{plain}
\newtheorem{theorem}{Theorem}[section]
\newtheorem{lemma}[theorem]{Lemma}

\newtheorem{proposition}[theorem]{Proposition}

\theoremstyle{definition}

\theoremstyle{remark}

\numberwithin{equation}{section}

\renewcommand{\le}{\leqslant}
\renewcommand{\ge}{\geqslant}

\DeclareFixedFont{\got}{U}{euf}{m}{n}{14.4pt}

\makeatletter
\newcommand{\namedlabel}[2]{%
  \begingroup
    #2%
    \def\@currentlabel{#2}%
    \phantomsection
    \label{#1}%
  \endgroup
}
\makeatother

\title[Density of the multidimensional Lagrange spectrum]{%
  Density of the multidimensional Lagrange spectrum%
}

\author[D. Kleinbock]{Dmitry Kleinbock}
\address{%
  Department of Mathematics,
  Brandeis University,
  Waltham, MA 02453, USA%
}
\email{kleinboc@brandeis.edu}

\date{}
\subjclass{11J06; 11J13, 37A17, 37A25}
\keywords{Diophantine approximation, Lagrange spectrum,  homogeneous dynamics, mixing, equidistribution of translates of horospheres}

\begin{document}

\begin{abstract} 
The Lagrange spectrum is a classical object in number theory, defined as the set of values of $ \liminf_{q\to\infty} q \dist(q\alpha,\Z)$ where $\alpha$ runs through irrational numbers. It has a  complicated structure, with the discrete part, Hall's ray, and a transitional part in between. One can similarly define \ls\ in the multidimensional set-up, and until now   not much has been understood about it. In this paper we prove that, unlike in the one-dimensional case, the closure of the multidimensional Lagrange spectrum is equal to the interval between $0$ and its supremum. The proof relies on a correspondence between \da\ and dynamics on the space of unimodular lattices  and proceeds by studying a dynamical counterpart of the \ls\ that we call dynamical \ls. The latter is shown to be equal to the interval between $0$ and its maximum by means of an argument utilizing the higher rank nature of the set-up. A passage from full dynamical spectrum to the density of the \di\ spectrum is achieved by applying equidistribution of expanding translates of horospheres in the space of lattices.
\end{abstract}

\maketitle

%\begin{sloppypar}

\section{Introduction}

\subsection{Basics of one-dimensional \da}
Let $\alpha$ be a real number. Dirichlet's theorem states that there are infinitely many positive integers $q$ such that
\[
|q\alpha-p| < \frac{1}{q}\ \text{ for some }p\in\Z.
\]
%holds, where $\|\cdot\|$ denotes the distance to the nearest integer. 
Hurwitz obtained a more precise result: for any irrational number $\alpha$, the inequality \linebreak
\(
|q\alpha-p| < \frac{1}{\sqrt{5}q}
\)
has infinitely many solutions $(p,q)\in\Z\times \N$. Moreover, there exists a countable set of numbers $\alpha$ for which this inequality is sharp, that is, for any positive $\varepsilon$ there are only finitely many $(p,q)\in\Z\times \N$ such that the inequality
\(
|q\alpha-p| < \left(\frac{1}{\sqrt{5}}-\varepsilon\right)/{q}
\)
holds.

Let us define the \textsl{Lagrange constant} $\lambda(\alpha)$ of a real number $\alpha$ as 
$$\lambda(\alpha) := \liminf_{q\to\infty} q \min_{p\in\Z}|q\alpha-p|  = \liminf_{q\to\infty} q \dist(q\alpha,\Z), $$
and call the image of the function $\lambda(\cdot)$ the \textsl{Lagrange spectrum} \eq{deflagrange}{\mathbb{L} := \{\lambda(\alpha) : \alpha\in\R\}.}   
Then one has $\mathbb{L} \subset\left[0, 1/{\sqrt{5}}\right]$.  It follows from Khintchine's theorem that $\lambda(\alpha) = 0$ for Lebesgue-almost every $\alpha\in\R$. A real number  whose Lagrange constant is positive is called \textsl{badly approximable}; those are known to form a set of \hd\ 1.  It is well-known that the set $\mathbb{L} $ has a quite complicated structure; in particular,  it contains a discrete part
\(
\frac{1}{\sqrt{5}}, \frac{1}{\sqrt{8}}, \dots
\)
accumulating at $\frac13$, and  also  an interval $[0, \lambda^*]$ known as Hall's ray. See \cite{Hall}, where it was shown that 
 $\lambda^* >0$, and Freiman's work  \cite{Freyman, Freymanbook}, where   $$\lambda^*   = \frac{491993569}{2221564096 + 283748 \sqrt{462}} 
 \approx 0.22085637$$ was explicitly computed.  Between the discrete top part and Hall's ray $\mathbb{L}$ possesses a rich fractal structure, with the Hausdorff dimension transitioning from $0$ near $1/3$ to $1$ near $\lambda^*$, see \cite{Mor}. The book \cite{CF} is an excellent introduction to the topic; note that all of the aforementioned results are obtained using continued fractions. We remark that in the literature, such as in \cite{CF} or   \cite{Mor}, it has been   customary to refer to the set $\{1/\lambda(\alpha) : \alpha\in\R\}$ as the Lagrange spectrum; however it is  more convenient for us to use definition \equ{deflagrange}, following \cite{A, Mos, PP, AM} and some other recent papers.

\subsection{The multidimensional \ls} It is natural to extend the notion of Lagrange constants and  the Lagrange spectrum to simultaneous \da. Fix $m, n \in \N$, and choose norms $\rho$ on $\R^m$ and  $\sigma$ on $\R^n$. Denote by $M_{m,n}(\R)$ the space of $m\times n$ matrices with real entries. For ${A} \in M_{m,n}(\R)$ (interpreted as a system of $m$ linear forms in $n$ variables) we define its \textsl{Lagrange constant} $\lambda_{\rho,\sigma}({A})$ by $$\lambda_{\rho,\sigma}({A}) := \liminf_{ \mathbf{q}\to \infty} \sigma(\mathbf{q})^n \min_{\vp\in\Z^m} \rho({A}\mathbf{q}-\vp)^m = \liminf_{ \mathbf{q}\to \infty} \sigma(\mathbf{q})^n \dist_\rho({A}\mathbf{q}, \Z^m)^m ,$$
where $\dist_\rho$ is the distance on $\R^m$ induced by the norm $\rho$. The \textsl{multidimensional Lagrange spectrum} is the range of this function:$$\mathbb{L}_{\rho,\sigma} := \{ \lambda_{\rho,\sigma}({A}) : {A} \in M_{m,n}(\R) \}.$$ When both $\rho$ and $\sigma$ are supremum norms, denoted by $\|\cdot\|_\infty$, we will use the notation $$\lambda_{m,n}({A}) =\liminf_{ \mathbf{q}\to \infty} \|\mathbf{q}\|_\infty^n \min_{\vp\in\Z^m} \|{A}\mathbf{q}-\vp\|_\infty^m$$ and $\mathbb{L}_{m,n} = \{ \lambda_{m,n}({A}) : {A} \in M_{m,n}(\R) \}$. Another important special case is given by $n=1$ and $\rho$ an arbitrary norm on $\R^m$; then for a vector $\va\in\R^m$  we will denote $$\lambda_{\rho,1}(\va) =\liminf_{ {q}\to \infty} |{q}|  \min_{\vp\in\Z^m} \rho(q\va-\vp)^m$$ and $\mathbb{L}_{\rho,1} = \{ \lambda_{\rho,1}(\va) : \va \in \R^m \}$.

When $m=n=1$,
% and the norms $\rho,\sigma$ on $\R$ are given by the absolute value,
one recovers the classical Lagrange spectrum $\mathbb{L} = \mathbb{L}_{1,1}$ defined in \equ{deflagrange}. Matrices ${A}$ with $\lambda_{m,n}({A}) > 0$ (equivalently, with $\lambda_{\rho,\sigma}({A}) > 0$  for any norms $\rho,\sigma$ on $\R^m$ and $\R^n$) are called {badly approximable}. The set of those matrices is denoted by $\mathbf{BA}_{m,n}$; it is known to be  of Lebesgue measure zero and Hausdorff dimension $mn$.

Let us denote  $L_{\rho,\sigma} = \sup \mathbb{L}_{\rho,\sigma} $,  $L_{\rho,1} = \sup \mathbb{L}_{\rho,1} $ and $L_{m,n} = \sup \mathbb{L}_{m,n} $. It follows from Minkowski's Convex Body Theorem that $L_{\rho,\sigma}$ is always finite; more precisely, $$L_{\rho,\sigma}\le \frac{2^d }{\vol(B_{\rho,\sigma})},$$ where $B_{\rho,\sigma}$ is the product of the $\rho$-unit ball in $\R^m$ and the $\sigma$-unit ball in $\R^n$. Specializing to    $\rho$ and $\sigma$ being supremum norms, one gets $L_{m,n}\le 1$, which is a consequence of Dirichlet's theorem in simultaneous \da. As is the case for $m=n=1$,  the upper bound obtained this way is in general not sharp. For example, it follows from 
\cite[Chapter 2, Theorem 3A]{S} that  $$L_{m,n}\le  \frac{m^m n^n}{(m+n)^{m+n}} \cdot \frac{(m+n)!}{m! n!}.$$
However, unless $ m = n = 1$, the exact value of $L_{m,n}$ is not known, and it is not clear whether or not $L_{m,n}$ belongs to  $\mathbb{L}_{m,n}$. 
For norms other than supremum norms the 
%above questions are also open, except 
only result known to the author is for the case $(m,n) = (2,1)$ and $\rho$ the Euclidean norm on $\R^2$. 
In this case ${L}_{\rho,1}$ was proved by Davenport and Mahler \cite{DM}  to be equal to $2/\sqrt{23}$, and it 
is mentioned in \cite[footnote on p.\ 173]{Cas} that using "isolation" techniques one can   show  that  ${L}_{\rho,1}\notin\mathbb{L}_{\rho,1}$.
%can be shown using "isolation" techniques based on \cite{DR} that ${L}_{\rho,1}$ does not belong to $\mathbb{L}_{\rho,1}$, see \cite[footnote on p.\ 173]{Cas}. 
Also to the best of the author's knowledge it is not  known whether or not  the multidimensional Lagrange spectrum is uncountable. 

More to the topic of this work, until the present paper it was not clear if  the closure of $\mathbb{L}_{\rho,\sigma} $ can contain an interval. There have been only some partial results on relative density of $\mathbb{L}_{\rho,1}$ for $m > 1$. Namely, in \cite{AM} it was shown that 
\eq{akhmos}{
\begin{aligned}\text{for all } m > 1 \text{ there exist }  r,\kappa > 0\text{  such that for }0 < x \le r\qquad\\ \text{the closed interval }\left[x,x(1 + \kappa x^{1/m})\right]\text{  contains an element of }\mathbb{L}_{m,1}.\end{aligned}} In a sequel \cite{A} to \cite{AM} the aforementioned result was extended to an arbitrary  norm $\rho$ on $\R^m$, and the dependence of $r$ and $\kappa$ on $m$ and  the norm was made explicit. 
Finally, very recently Ward \cite{W}  announced a proof of an improvement to \equ{akhmos} where the interval $\left[x,x(1 + \kappa x^{1/m})\right]$ is replaced by $\left[x,x(1 + \kappa x)\right]$, with the same conclusion. 
%From there it easily follows that the lower box dimension of $\mathbb{L}_{m,1}$ is at least $1 - \frac1{m+1}$. 

\smallskip
The first main result of this paper is a significant improvement and generalization of the aforementioned developments. Namely we  prove

\begin{theorem}\label{thm1} Let $m, n \in \N$ with $\max(m,n) > 1$ be given, and let  $\rho$  and  $\sigma$ be   norms on $\R^m$ and $\R^n$ respectively. Then  the closure of  ${\mathbb{L}_{\rho,\sigma}} $ is equal to  $[0, L_{\rho,\sigma}]$. Moreover,  for any non-empty open interval $I \subset \left[0, L_{\rho,\sigma}\right]$, the set 
$$  \{A\in\mr : \lambda_{\rho,\sigma}(A) \in I\}$$
is dense in $\mr$.

\end{theorem}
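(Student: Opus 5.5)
Following the strategy announced in the abstract, the proof runs through the Dani correspondence. Let $d=m+n$ and $X=\SL_d(\R)/\SL_d(\Z)$. Put
\[
u_A=\begin{pmatrix} I_m & A\\ 0 & I_n\end{pmatrix},\qquad g_t=\diag(e^{t/m},\dots,e^{t/m},e^{-t/n},\dots,e^{-t/n}),
\]
and define on $X$ the function
\[
\Delta(\Lambda)=\inf\{\rho(\vx)^m\sigma(\vy)^n : (\vx,\vy)\in\Lambda\nz\}.
\]
Then $\Delta$ is upper semicontinuous and invariant under the full diagonal group $H$ of block-scalar matrices $\diag(a I_m, bI_n)$ with $a^mb^n=1$, which is $\{g_t\}$ itself. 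The first step is to check the standard identity
\[
\lambda_{\rho,\sigma}(A)=\lim_{\varepsilon\to0}\liminf_{t\to\infty}\Delta_\varepsilon(g_tu_A\Z^d),
\]
where $\Delta_\varepsilon$ is the same infimum restricted to vectors with $\rho(\vx)$ and $\sigma(\vy)$ both comparable to $1$, up to the usual boundary care near $\vq$ small. Equivalently, $\lambda(A)=\liminf_{t\to\infty}\Delta(g_tu_A\Z^d)$, once one checks that vectors with $\vq=0$ or with tiny $\vq$ do not matter as $t\to\infty$.

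With this in hand, define the dynamical spectrum $\mathbb{D}=\{\liminf_{t\to\infty}\Delta(g_t\Lambda):\Lambda\in X\}$. It suffices to prove two things.

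\textbf{(1) $\mathbb{D}$ is the full interval $[0,\max\mathbb{D}]$.} Use the higher rank assumption $\max(m,n)>1$: when $m>1$, the centralizer of $g_t$ contains $\SL_m(\R)$, acting on the $\vx$-block, and similarly for $n>1$. For $\Lambda$ with $\liminf\Delta(g_t\Lambda)=c$, consider a compact $g_t$-invariant set such as an $\omega$-limit set $K$, on which $\Delta\ge c$. Deform it by $h\in \SL_m(\R)$ commuting with $g_t$. Then $\Delta(h\Lambda')$ is the infimum of $\rho(h\vx)^m\sigma(\vy)^n$, and $h\mapsto \rho\circ h$ ranges over norms whose unit-ball volume is fixed. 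Varying $h$ continuously from the identity to elements that distort $\rho$ so strongly that the infimum collapses toward $0$, one expects the attained liminf value to vary over all of $[0,c]$.

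The genuine obstacle in (1) is continuity: $\liminf_t\Delta(g_th\Lambda)$ is not obviously continuous in $h$. I would handle this by working with periodic or closed $H$-orbits, namely lattices from totally real number fields, where the liminf is a minimum over a compact orbit and hence is continuous in $h$. Alternatively, I would glue orbit segments with a shadowing argument: concatenate long pieces of trajectories approximating the levels $c_1$ and $c_2$ with controlled transitions, which produces intermediate values. Maximality, i.e. that $\sup\mathbb{D}$ is attained, follows from upper semicontinuity plus compactness, via Mahler's criterion.

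\textbf{(2) From $\mathbb{D}$ to $\mathbb{L}$, with density.} Fix an open $U\subset\mr$ and a target $c\in I$. Take $\Lambda_0$ with $\liminf_t\Delta(g_t\Lambda_0)=c$, chosen generic for its $\omega$-limit behaviour. By equidistribution of expanding translates of the horosphere $\{u_A\Z^d: A\in U\}$ under $g_T$, some $A_0\in U$ has $g_Tu_{A_0}\Z^d$ very close to $\Lambda_0$. Write $g_Tu_{A_0}\Z^d=p\Lambda_0$ with $p$ near the identity, and decompose $p$ into expanding, neutral and contracting parts. Adjusting $A_0$ within the expanding horosphere, which corresponds to a small change of $A$ after conjugating back by $g_T$, one arranges that the forward orbit of $g_Tu_A\Z^d$ is asymptotic to that of $h\Lambda_0$ for a neutral $h$ near the identity. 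Then $\lambda(A)=\liminf_t\Delta(g_th\Lambda_0)$, which lies in $I$ by the continuity established in (1), at least on the closed orbits. This gives density of $\{A:\lambda(A)\in I\}$. The closure statement follows because $0\in\overline{\mathbb L}$ is clear and $\sup\mathbb L=L_{\rho,\sigma}$.

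I expect the main difficulty to be step (1), the continuity and intermediate value argument for the dynamical spectrum.
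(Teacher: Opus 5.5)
There is a genuine gap, beginning with the function you build everything on. As you note yourself, $\Delta$ is invariant under $g_t$, so $\liminf_{t\to\infty}\Delta(g_t\Lambda)=\Delta(\Lambda)$. Your $\mathbb D$ is therefore just the range of a flow-invariant function and cannot encode a forward $\liminf$. Concretely, every $u_A\Z^d$ contains the vectors $(\vp,\mathbf 0)$ with $\vp\in\Z^m\nz$, so $\Delta(g_tu_A\Z^d)=0$ for all $t$ and all $A$. The ``equivalently'' identity $\lambda_{\rho,\sigma}(A)=\liminf_{t\to\infty}\Delta(g_tu_A\Z^d)$ is false, and the vectors with $\vq=\mathbf 0$ never become irrelevant, because $\Delta$ does not see time. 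The same defect breaks step (2), which needs the dynamical constant to ignore the contracting component of the shadowing decomposition. With $\Delta$, the constant of $g_Tu_A\Z^d$ is $0$ however close to $\Lambda_0$ this lattice lands.

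The right object measures the lattice at each time by a norm. Take $\eta(\vx,\vy)=\max(\rho(\vx),\sigma(\vy))$, let $\delta_\eta$ be the $\eta$-length of a shortest nonzero vector, and set $\lambda(x)=\liminf_{t\to\infty}\delta_\eta(g_tx)^d$. Then $\lambda_{\rho,\sigma}(A)=\lambda(u_A\Z^d)$ exactly: each $\vq$ contributes its minimum $\sigma(\vq)^n\dist_\rho(A\vq,\Z^m)^m$ at the time when the two components of $g_t(A\vq-\vp,\vq)$ balance. Moreover $\lambda$ is invariant under the contracting horospherical subgroup $U^-$, because the $\eta$-operator norms $\|g_tug_{-t}\|$ and $\|(g_tug_{-t})^{-1}\|$ tend to $1$ for $u\in U^-$. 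Your $\Delta_\varepsilon$ hedge points in this direction but is then dropped.

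Step (1) is missing its key idea, and the obstacle you single out is not one. Continuity along the centralizer $Z$ of $\{g_t\}$ holds at every $x$, not only on closed orbits. Indeed $\|z^{-1}\|^{-1}\delta_\eta(y)\le\delta_\eta(zy)\le\|z\|\,\delta_\eta(y)$ uniformly in $y$, and together with $zg_t=g_tz$ this gives $\|z^{-1}\|^{-d}\lambda(x)\le\lambda(zx)\le\|z\|^d\lambda(x)$. What you do not supply is a reason why some path in $Z$ actually pushes the value down past the target. ``One expects'' is not an argument, and gluing orbit segments at levels $c_1,c_2$ gives a $\liminf$ of at most about $\min(c_1,c_2)$, not values in between.

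The paper's mechanism aims at the specific vector realizing the $\liminf$. Since $\lambda(x)>0$ the orbit is bounded, so one can take $g_{t_k}x\to x_\infty$ and shortest vectors $\mathbf v_k\in g_{t_k}x$ with $\mathbf v_k\to\mathbf v\in x_\infty$ and $\eta(\mathbf v)^d=\lambda(x)$. Using two coordinates of equal weight, one builds $h_s\in Z$ with $h_s\mathbf v=s\mathbf v$. Namely $h_s=s\,h'_s$, where $h'_s$ fixes the projection of $\mathbf v$ to those two coordinates, multiplies a complementary vector in that plane by $s^{-d}$, and is the identity elsewhere. Then $h_s\mathbf v_k\in g_{t_k}h_sx$ and $\eta(h_s\mathbf v_k)\to s\,\eta(\mathbf v)$, so $\lambda(h_sx)\le s^d\lambda(x)$. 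The intermediate value theorem on $[s,1]$ then yields every value in $[s^d\lambda(x),\lambda(x)]$. Combined with a compactness argument for attaining the supremum (using continuity of $\delta_\eta$), this gives the whole interval.

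Your $\SL_m(\R)$-distortion idea can be made to work along the same lines. Shrink the $\vx$-component of this particular $\mathbf v$ by $s$ and rebalance with a fixed time shift $g_\tau$; this gives $\lambda(hx)\le s^m\lambda(x)$. The point is that you must aim at $\mathbf v$.

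With these corrections your step (2) is essentially the paper's argument: equidistribution of $g_t$-translates of pieces of $U$-orbits, the local product structure $UZU^-$ near the identity, and shadowing.
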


This shows that the multidimensional \ls\ behaves very differently from its one-dimensional counterpart. Note that our proof does not show that ${\mathbb{L}_{\rho,\sigma}} $ is uncountable, and sheds no new light on whether or not $L_{\rho,\sigma}$ itself belongs to the spectrum.

%where $\Vert{}\cdot\Vert{}$ denotes the supremum norm on the respective Euclidean spaces (or any fixed norm, up to a constant shift). 

\subsection{A scheme of proof via a dynamical \ls}
Following the seminal framework of Dani \cite{dani}, we translate the computation of multidimensional Lagrange constants into the language of dynamics on the space of lattices in $\R^{m+n}$. Then we proceed to define a dynamical counterpart of the notion of the Lagrange spectrum and establish that under the higher rank assumption every intermediate value is realized as dynamical Lagrange constant of some lattice. After that a passage to the density of the Diophantine \ls\ is achieved via ergodic theory: that is, using a shadowing argument and the   equidistribution of expanding translates of horospheres relative to the flow.
 
Let $d = m + n$. We consider the Lie group $G = \SL_d(\R)$ and its discrete subgroup $\Gamma = \SL_d(\Z)$. The homogeneous space $X_d := G/\Gamma$ naturally parametrizes the space of unimodular lattices in $\R^d$ via the map $g\Gamma \mapsto g\Z^d$. Note that $X_d$ has a  $G$-invariant probability measure that we will denote by $\mu$. 

For any pair of norms $\rho$ on $\R^m$ and $\sigma$ on $\R^n$, we define a norm $\eta$ on $\R^d$ by 
\eq{defeta}{
\eta\big((\mathbf{y}, \mathbf{z})\big) := \max\big( \rho(\mathbf{y}), \, \sigma(\mathbf{z}) \big),
}
where $(\mathbf{y}, \mathbf{z}) \in \R^m \times \R^n = \R^d$. 
Then define a continuous function $\delta_{\eta} : X_d \to \R_{>0}$ that measures the $\eta$-length of the shortest non-zero vector in a lattice ${x} \in X_d$:
%. Let $\|\cdot\|_{\rho,\sigma} $ be a norm on $\R^d$ given by 
%Then, for a lattice ${x} \in X_d$, we define the minimum vector length function $\delta_{\eta}$ by
\eq{defdelta}{
\delta_{\eta}({x}) := \inf_{\mathbf{v} \in {x} \nz} \eta(\mathbf{v}).
}
It follows from Minkowski's Convex Body Theorem \cite[Ch. III,  
Theorem II]{Cas} and Mahler's Compactness Theorem \cite[Theorem IV, \S V.4.2]{Cas} that the function $\delta_{\eta}$ is bounded and tends to $0$ at infinity in $X_d$. Also because every  $x \in X_d$ is a discrete subset of  $\R^d$, the infimum in \equ{defdelta} is always attained at some non-zero vector of $x$.
%, a subset $K \subset X_d$ is relatively compact if and only if $\inf_{{x} \in K} \delta_{\eta}({x}) > 0$.

Let $I_k$ stand for the $k\times k$ identity matrix. For $t \in \R$, define the one-parameter diagonal subsemigroup 
\eq{defF}
{F = \{g_t: t \ge 0\} \subset G, \text{ where }g_t := \begin{pmatrix} e^{t/m} I_m & 0 \\ 0 & e^{-t/n} I_n \end{pmatrix}.
}
Given an $m \times n$ matrix $A \in M_{m,n}(\R)$, we associate to it the unimodular matrix
\[
u_A := \begin{pmatrix} I_m & A \\ 0 & I_n \end{pmatrix} \in G,
\]
and the corresponding unimodular lattice $x_A := u_A \Z^d \in X_d$. Note that the group \eq{defU}{U:= \{u_A: A\in\mr\}} is precisely the expanding horospherical subgroup of $G$ relative to $F$; this fact will be crucial for our approach.

%The non-zero vectors of $x_A$ are precisely of the form
%\[
%\mathbf{v}_{\mathbf{p}, \mathbf{q}} = \begin{pmatrix} A\mathbf{q} - \mathbf{p} \\ \mathbf{q} \end{pmatrix} \quad \text{for } (\mathbf{p}, \mathbf{q}) \in (\Z^m \times \Z^n )\nz .
%\]
In  \S2 we will prove the following identity connecting $\lambda_{\rho,\sigma}(A)$ to the asymptotic behavior of the trajectory $\{g_t x_A\}_{t \ge 0}$ in $X_d$ that can be thought of as a quantitative strengthening of Dani's Correspondence: 

\begin{theorem}\label{thm:dynamical_formula}
For any   $A \in M_{m,n}(\R)$ and any pair of norms $\rho, \sigma$ on $\R^m$ and $\R^n$ respectively, we have
\eq{dynformula}{
\lambda_{\rho,\sigma}(A) = \liminf_{t \to \infty}   \delta_{\eta}(g_t x_A)  ^{d},
}
where $\eta$ is as in \equ{defeta} and $\delta_{\eta}$ is as in \equ{defdelta}.
\end{theorem}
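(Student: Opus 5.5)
The plan is to compare both sides directly through the vectors of $g_t x_A$. The nonzero vectors of $x_A$ are $\mathbf{v}=(A\vq-\vp,\vq)$ with $(\vp,\vq)\in(\Z^m\times\Z^n)\nz$. Applying $g_t$ gives $(e^{t/m}(A\vq-\vp),\,e^{-t/n}\vq)$, so
\[
\eta(g_t\mathbf{v})^d=\max\big(e^{t/m}\rho(A\vq-\vp),\,e^{-t/n}\sigma(\vq)\big)^d .
\]
The key elementary observation is that for fixed $\mathbf{v}$ with $a=\rho(A\vq-\vp)>0$ and $b=\sigma(\vq)>0$, minimizing over $t$ is balanced exactly when $e^{t/m}a=e^{-t/n}b$, i.e.\ $e^{t(1/m+1/n)}=b/a$. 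At that time $t_{\mathbf v}$ the common value $c$ satisfies $c^{m}=e^{t}a^{m}$ and $c^n=e^{-t}b^n$, so $c^d=a^mb^n$. Thus
\[
\min_{t\in\R}\eta(g_t\mathbf{v})^d=\sigma(\vq)^n\rho(A\vq-\vp)^m ,
\]
which is precisely the quantity appearing in $\lambda_{\rho,\sigma}(A)$.

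\emph{Inequality $\liminf_t\delta_\eta(g_tx_A)^d\le\lambda_{\rho,\sigma}(A)$.} Take $\vq_k\to\infty$ with $\sigma(\vq_k)^n\dist_\rho(A\vq_k,\Z^m)^m\to\lambda$, and let $\vp_k$ be the nearest integer vectors. If infinitely many $A\vq_k-\vp_k=0$, then $\delta_\eta(g_tx_A)\to0$ along suitable $t$: the vector $(0,\vq_k)$ shrinks forever, so $\liminf_t\delta_\eta^d=0=\lambda$. Otherwise, evaluate at $t_k=t_{\mathbf v_k}$. Since $b/a=\sigma(\vq_k)/\rho(A\vq_k-\vp_k)\ge\sigma(\vq_k)/C\to\infty$ (distances to $\Z^m$ are bounded), we get $t_k\to\infty$ and $\delta_\eta(g_{t_k}x_A)^d\le\sigma(\vq_k)^n\rho(A\vq_k-\vp_k)^m\to\lambda$.

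\emph{Reverse inequality.} Take $t_k\to\infty$ with $\delta_\eta(g_{t_k}x_A)^d\to\ell$, and let $\mathbf v_k=(A\vq_k-\vp_k,\vq_k)$ attain the minimum. Then
\[
\sigma(\vq_k)^n\rho(A\vq_k-\vp_k)^m=\min_t\eta(g_t\mathbf v_k)^d\le\eta(g_{t_k}\mathbf v_k)^d .
\]
We need $\vq_k\to\infty$. The bound $\eta(g_{t_k}\mathbf v_k)\le \sup\delta_\eta$ gives $\rho(A\vq_k-\vp_k)\le Ce^{-t_k/m}\to0$. If $\vq_k$ stayed bounded along a subsequence, we would have finitely many $\vq$; then $\rho(A\vq-\vp)\to0$ forces $A\vq=\vp$ for some fixed nonzero integer vector. (If $\vq=0$, then $\vp\ne0$ and $\rho(\vp)\ge c>0$, a contradiction.) In that case $\lambda_{\rho,\sigma}(A)=0$ via multiples $j\vq$, and $\ell\ge0$ trivially gives the inequality. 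Otherwise $\vq_k\to\infty$, and $\lambda\le\liminf_k\sigma(\vq_k)^n\dist_\rho(A\vq_k,\Z^m)^m\le\ell$.

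The main obstacle is bookkeeping the degenerate cases: rational relations $A\vq\in\Z^m$, and ensuring the balancing time tends to infinity. The balancing computation itself is routine.
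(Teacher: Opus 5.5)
Your proof is correct and is essentially the paper's argument: evaluating at the balancing time $t_{\mathbf v}$ gives $\liminf_{t}\delta_\eta(g_tx_A)^d\le\lambda_{\rho,\sigma}(A)$, and the bound $\sigma(\vq)^n\rho(A\vq-\vp)^m\le\eta(g_t\mathbf v)^d$, applied to shortest vectors of $g_{t_k}x_A$, gives the reverse inequality. The only difference is that you handle the degenerate case ($A\vq\in\Z^m$ for some $\vq\ne 0$, including bounded $\vq_k$) directly, whereas the paper first reduces to $A\in\mathbf{BA}_{m,n}$ by citing Dani's correspondence, which makes your version slightly more self-contained.
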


In view of Mahler's Compactness Criterion,  \equ{dynformula} readily implies that  $A \in \mathbf{BA}_{m,n}$ if and only if the trajectory $\{g_t x_A\}_{t \ge 0}$ is bounded in $X_d$, which is precisely \cite[Theorem~2.20]{dani}.

\smallskip

Theorem \ref{thm:dynamical_formula} makes it possible to put the study of the \ls\ into a broader geometric/dynamical context. Let $\eta$ be an arbitrary norm on $\R^d$, and let  \linebreak $F = \{g_t: t \ge 0\} \subset G$ be a one-parameter unbounded subsemigroup. Then for any $x\in X_d$ one can measure the "essential size" of the trajectory $Fx$ using the function $\delta_\eta$ by defining 
\eq{lambdadyn}{
\lambda^F_\eta(x):= \liminf_{t\to\infty}\delta_\eta(g_tx)^d.}
We can think of $
\lambda^F_\eta(x)$ as a \textsl{dynamical Lagrange constant} of $x$ with respect to   the acting semigroup $F$ and the norm $\eta$, and refer to the set of values of  $
\lambda^F_\eta$ as  the \textsl{dynamical Lagrange spectrum}    with respect to  $F$ and  $\eta$:
\eq{Ldyn}{
\mathbb{L}^F_\eta := \left\{\lambda^F_\eta(x): x\in X_d\right\}.
}
Clearly $
\lambda^F_\eta(x) = 0$ if and only if $Fx$  is unbounded; since the $F$-action on $(X_d,\mu)$ is ergodic by Moore's Ergodicity Theorem, it follows that  $
\lambda^F_\eta(x) = 0$ for $\mu$-a.e.\ $x\in X_d$. In particular $
\mathbb{L}^F_\eta$ is a bounded subset of $[0,\infty)$ containing $0$. Let us use the notation $
L^F_\eta:= \sup
\mathbb{L}^F_\eta$.

Note that the dynamical \ls\  is not identical to the Diophantine  spectrum defined previously, since the latter, according to Theorem \ref{thm:dynamical_formula}, consists of values of dynamical Lagrange constants attained on a certain proper subset of $X_d$. More precisely, for $Y\subset X_d$ one can define the  {dynamical Lagrange spectrum} restricted to $Y$: $$
\mathbb{L}^F_\eta(Y) := \left\{\lambda^F_\eta(x): x\in Y\right\}.
$$ 
Then it follows from Theorem \ref{thm:dynamical_formula} that for two norms $\rho, \sigma$ on $\R^m$ and $\R^n$ one has 
$$\mathbb{L}_{\rho,\sigma} = \mathbb{L}^F_\eta(U\Z^d),$$ where $\eta$ is as in \equ{defeta}, $F$ is as in \equ{defF} and $U$ is as in \equ{defU}.

Yet at this point one can pose a separate question of describing the set $
\mathbb{L}^F_\eta$, that is, the set of values of dynamical Lagrange constants attained on all the unimodular lattices. It turns out that under a certain "higher rank" assumption that is absent in the case $d=2$ the answer to this question is surprisingly clean.

\begin{theorem}\label{thm:full_spectrum} Let $\eta$ be a norm on $\R^d$ and let 
%$F$ be a diagonal one-parameter  subsemigroup of $G$ given by 
$F = \{g_t: t \ge 0\}$, where \eq{diaggt}{g_t = \diag\left( e^{w_1 t},\dots, e^{w_d t}\right),}
and $w_1,\dots,w_d\in\R$ are such that $\sum_{i=1}^dw_i = 0$, $w_i \ne 0$ for some $i = 1,\dots,d$, and
%\begin{itemize}
%\item[\rm (i)] $w_i \ne 0$ for all $i$, and 
%\item[\rm (ii)]  
\eq{equaleigenvalues}{w_i = w_j\text{ for some }i\ne j.}
%\end{itemize}
Then $
\mathbb{L}^F_\eta = \left[0,L^F_\eta\right]$.
\end{theorem}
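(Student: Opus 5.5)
The plan is to reformulate $\lambda^F_\eta$ in terms of $\omega$-limit sets and then use the part of the centralizer of $F$ that exists because of \equ{equaleigenvalues} to move continuously between values of the spectrum.

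\emph{Step 1 (reformulation).} For $x\in X_d$ let $\omega(x)$ be the $\omega$-limit set of $Fx$. If $Fx$ is unbounded then $\lambda^F_\eta(x)=0$. If $Fx$ is bounded, $\omega(x)$ is a non-empty compact forward $F$-invariant set, and continuity of $\delta_\eta$ gives
\[
\lambda^F_\eta(x)=\min_{y\in\omega(x)}\delta_\eta(y)^d=:m(\omega(x)).
\]

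\emph{Step 2 ($L^F_\eta$ is attained).} Choose $x_k$ with $\lambda^F_\eta(x_k)\to L^F_\eta$, and put $c_k=\lambda^F_\eta(x_k)$ and $K_k=\omega(x_k)$. Each $K_k$ lies in the compact set $\{\delta_\eta\ge c_k^{1/d}\}$, which is contained in $\{\delta_\eta\ge (L^F_\eta/2)^{1/d}\}$ for $k$ large. Pass to a Hausdorff limit $K$ of the $K_k$. Then $K$ is compact, forward $F$-invariant, and $\delta_\eta^d\ge L^F_\eta$ on $K$. Any $y\in K$ has $\omega(y)\subset K$, so $\lambda^F_\eta(y)\ge L^F_\eta$, and hence $\lambda^F_\eta(y)=L^F_\eta$. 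Since $0$ is also attained, it remains to realize each value $c\in(0,L^F_\eta)$.

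\emph{Step 3 (the centralizer, where higher rank enters).} Because $w_i=w_j$, the copy $H\cong\SL_2(\R)$ of $\SL_2$ acting on the coordinates $e_i,e_j$ and trivially on the others commutes with every $g_t$. Hence for $h\in H$,
\[
g_t(hx)=h(g_tx),\qquad \omega(hx)=h\,\omega(x),\qquad \lambda^F_\eta(hx)=m\bigl(h\,\omega(x)\bigr).
\]
Fix $x$ with $\lambda^F_\eta(x)=L^F_\eta$ from Step 2 and set $K=\omega(x)$. The function $h\mapsto m(hK)$ is continuous on $H$: $K$ is compact and $\delta_\eta$ is uniformly continuous on compacta, so $|m(hK)-m(h'K)|$ is small when $h'h^{-1}$ is near the identity. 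Since $H$ is connected, the intermediate value theorem gives every $c\in(0,L^F_\eta]$, provided $\inf_{h\in H}m(hK)=0$.

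\emph{Step 4 (the main obstacle).} We must show that $HK$ is not relatively compact in $X_d$. Suppose instead $\overline{HK}$ were compact. Averaging a point mass at some $y\in K$ along a Følner-type family for a unipotent one-parameter subgroup $\{u_s\}\subset H$ and taking a weak-$*$ limit yields a $u_s$-invariant probability measure supported in the compact set $\overline{HK}$. By Ratner's measure classification, some ergodic component is the homogeneous measure on a closed orbit $L\Gamma'$ for a closed subgroup $L\ni u_s$. This orbit is compact, so $L\cap\Gamma$ is a cocompact lattice in $L$ containing nontrivial unipotents, contradicting Godement's criterion. Equivalently one can argue directly with Dani--Margulis nondivergence to see that unipotent orbits cannot stay in $\{\delta_\eta\ge\varepsilon\}$ for all $\varepsilon>0$ on every point of an $H$-invariant closure. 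Therefore $\inf_{h}m(hK)=0$ and Step 3 finishes the proof.

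I expect Step 4 to be the delicate point. One must be careful that Ratner's theorem applies to the measure on the compact $H$-invariant set $\overline{HK}$. A fallback is a more hands-on argument: find $y\in K$ and a lattice vector whose coordinates outside $\Span(e_i,e_j)$ are small, using an $F$-recurrence or Minkowski argument, and then shrink that vector with the diagonal element $\diag(e^{s},e^{-s})$ in the $(i,j)$ block.
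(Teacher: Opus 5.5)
Your overall route is sound and genuinely different from the paper's, but it only works if Step~4 is carried out via your fallback. The Ratner/Godement argument you lead with does not work.

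\emph{Why the Ratner/Godement argument fails.} It uses only that the limit measure is $u_s$-invariant and compactly supported. No contradiction can come from that, because compact $u_s$-invariant sets exist. If $u_s$ is the upper unipotent of the $(i,j)$-block, then $\{u_s\Z^d\}$ is a periodic, hence compact, orbit. There Ratner's $L$ is $\{u_s\}$ itself, and $L\cap\Gamma\cong\Z$ is a cocompact lattice made of unipotents. Godement's criterion only excludes unipotents outside the unipotent radical, so nothing is contradicted; you also never showed that $L\cap\Gamma$ contains unipotents in the first place. Even an $H$-invariant measure would not help unless you use how $H$ sits in $G$. Groups locally isomorphic to $\SL_2(\R)$ can have compact orbits in $X_3$, e.g.\ the orbit of $\Z^3$ under $\mathrm{SO}(q)$ for the $\mathbb{Q}$-anisotropic form $q=x_1^2+x_2^2-3x_3^2$. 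Dani--Margulis nondivergence also points the wrong way: it says unipotent orbits keep returning to compact sets.

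\emph{The fallback is a complete proof.} It needs no recurrence. Let $Q$ be the coordinate projection onto $\Span(\mathbf{e}_i,\mathbf{e}_j)$ and $P=I_d-Q$. Apply Minkowski's theorem to the box $\{\mathbf{w}:\|P\mathbf{w}\|_\infty\le\varepsilon,\ \|Q\mathbf{w}\|_\infty\le\varepsilon^{-(d-2)/2}\}$, which has volume $2^d$. This gives $\mathbf{w}\in y\nz$ with $\|P\mathbf{w}\|_\infty\le\varepsilon$, for \emph{any} $y\in X_d$. Since $H$ fixes $P\mathbf{w}$ and acts transitively on $\Span(\mathbf{e}_i,\mathbf{e}_j)\nz$, some $h\in H$ gives $\|h\mathbf{w}\|_\infty\le\varepsilon$. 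Here you need a general element of $H$, not just $\diag(e^{s},e^{-s})$, unless you rotate first. Hence $\inf_{h\in H}m(hK)\le\inf_{h\in H}\delta_\eta(hy)^d=0$ for any $y\in K$, and Step~3 concludes.

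\emph{Comparison with the paper.} Your Step~2 is an equivalent variant of Proposition~\ref{topspectrum}, which simply shifts $x_k$ by $g_{T_k}$ and takes a limit point. The real difference is the choice of centralizer element. The paper does not use the $\SL_2$-block at all. It uses the element $h_s=s\,h_s'\in Z$ of Lemma~\ref{commuting}, a $\GL_2$-block element times the scalar $s$, built from the limiting shortest vector $\mathbf{v}$ of Proposition~\ref{equiv2} so that $h_s\mathbf{v}=s\mathbf{v}$. This gives $\lambda^F_\eta(h_sx)\le s^d\lambda^F_\eta(x)$ immediately (Proposition~\ref{movingdown}). Continuity along $s\mapsto h_s$ (Lemma~\ref{ZF}) plus the intermediate value theorem on $[s,1]$ then finishes, with no lemma about unboundedness of orbits.

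Your $H$ fixes all coordinates outside the block, so it cannot shrink $\mathbf{v}$ as a whole. That is exactly why you need the Minkowski step, to produce lattice vectors whose out-of-block part is already small. In exchange you work with a fixed subgroup, independent of $x$ and $\mathbf{v}$, and you bypass Propositions~\ref{equiv1} and~\ref{equiv2}. Both routes show $(0,\lambda^F_\eta(x)]\subset\mathbb{L}^F_\eta$ for every $x$ with bounded $F$-orbit.
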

In other words, assuming \equ{equaleigenvalues}, every intermediate value between $0$ and the top of the dynamical \ls, including the top value itself, is a  dynamical Lagrange constant of some lattice in $X_d$. This sharply contrasts with the rank-one picture ($d=2$), in which assumption \equ{equaleigenvalues} is squarely ruled out. This theorem will be proved in \S\ref{dynlagr}.

\smallskip
Our next result provides a bridge between Theorem \ref{thm:full_spectrum} and Theorem \ref{thm1}. 
\begin{theorem}\label{thm:density}
Let  $\max(m,n) > 1$, let $\rho, \sigma$ be norms on $\R^m$ and $\R^n$ respectively. Choose $\eta$ as in \equ{defeta} and $F$ as in \equ{defF}. Also   let $x_0\in X_d$ be such that $\lambda^F_\eta(x_0) > 0$ (equivalently, $Fx_0$ is bounded). Then for any $x\in X_d$ and any non-empty open interval $I$ containing $\lambda^F_\eta(x_0)$, the set 
$$  \left\{A\in\mr : \lambda^F_\eta(u_Ax) \in I\right\}$$
is dense in $\mr$.
\end{theorem}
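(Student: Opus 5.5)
Fix $x\in X_d$, a nonempty open set $V\subset\mr$, and an open interval $I\ni\lambda_0:=\lambda^F_\eta(x_0)>0$. Since $Fx_0$ is bounded, it lies in a compact set $K\subset X_d$. The plan is to build $A\in V$ inductively, as the limit of a nested sequence of closed balls $\overline{B}_k\subset V$ in $\mr$. The construction is arranged so that the trajectory $g_tu_Ax$ alternates between two kinds of stretches: long stretches that shadow a segment of the orbit $Fx_0$, and short transition stretches during which it stays in a fixed compact set where $\delta_\eta$ is bounded below by roughly $\lambda_0^{1/d}$.

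The basic mechanism is the following. Suppose $B\subset\mr$ is a small ball, $T$ is large, and $y\in X_d$. Then $g_T u_B y$ is an expanding translate of a piece of the horosphere $Uy$. By equidistribution of expanding translates of $U$-orbits, the proportion of $A\in B$ with $g_Tu_Ay$ lying in a small neighborhood $W$ of $x_0$ tends to $\mu(W)/\mu(X_d)>0$. This uses mixing of the $g_t$-action via Margulis' thickening argument, and it holds because $U$ is the expanding horospherical subgroup of $F$. In particular, such $A$ exist.

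Near such a point, write $g_Tu_Ay=hx_0$ with $h$ close to the identity. Decompose $h$ as $u\cdot p$, with $u\in U$ and $p$ in the weakly contracting parabolic subgroup $P$ normalized by $g_t$. Then adjusting $A$ by $g_{-T}u^{-1}g_T$, which is exponentially small in $\mr$, gives an $A'$ with $g_Tu_{A'}y=px_0$. The point of this adjustment is that $g_tp g_{-t}$ stays close to the identity for all $t\ge0$, since it converges to the centralizer part. Consequently $g_{T+t}u_{A'}y$ remains within a small distance $\epsilon$ of $g_tx_0$ for all $t\ge 0$, and the same holds for all $A''$ in a tiny ball around $A'$ on a time window $t\in[0,S]$. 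This is the shadowing step.

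The induction runs as follows. At stage $k$ we have a ball $B_k$ and times $T_k$ such that, for every $A\in B_k$, the orbit $g_tu_Ax$ tracks $Fx_0$ with error $\epsilon_k\to0$ on $[T_j, T_j+S_j]$ for each $j\le k$. We choose $S_k$ large enough that $\liminf$-behavior is visible, meaning that $\inf_{s\in[s_0,S_k]}\delta_\eta(g_sx_0)^d$ is close to $\lambda_0$ for suitable $s_0$. Then, starting at time $T_k+S_k$, we apply the mechanism above again with $y=g_{T_k+S_k}u_{A}x$, which lies within $\epsilon_k$ of the compact set $K$, to find a sub-ball and a new time $T_{k+1}$.

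The main obstacle is controlling $\delta_\eta$ during the transition windows $[T_k+S_k,T_{k+1}]$, so that the $\liminf$ is not pushed below $I$. My first approach is to make the transitions trivial: choose $T_{k+1}=T_k+S_k+\tau$ with bounded $\tau$ and a return of $g_\tau$-translates near $x_0$. To do this, use that $g_{S}x_0$ returns arbitrarily close to its $\omega$-limit set $\Omega\subset K$. Replace the target $x_0$ by a point $x_1\in\Omega$ with $\lambda^F_\eta(x_1)\ge\lambda_0$ and with $\inf_{t\ge0}\delta_\eta(g_tx_1)^d$ close to $\lambda_0$, where $\lambda_0$ is interpreted via $\liminf$. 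Then choose $S_k$ along return times of $g_tx_1$ to a neighborhood of $x_1$. In that case the shadowing simply continues along the same orbit, and no uncontrolled transition occurs; this avoids needing equidistribution from generic points.

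With this choice, for the limit $A=\bigcap_k\overline{B}_k\in V$, we get $\liminf_t\delta_\eta(g_tu_Ax)^d\in[\lambda_0-\epsilon,\lambda_0+\epsilon]\subset I$. The upper bound comes from the tracked minima of $\delta_\eta$ along $Fx_1$ occurring at arbitrarily large times. The lower bound comes from $\epsilon_k$-tracking together with continuity of $\delta_\eta$, which is uniform on $K$.
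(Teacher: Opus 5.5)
Your shadowing step is exactly the key step of the paper's proof, but you did not notice that it already finishes the argument. The nested-ball induction you build on top of it has a genuine gap, precisely at the point you flag as the main obstacle.

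To make the transitions trivial you need a point $x_1$ in the $\omega$-limit set $\Omega$ of $Fx_0$ with two properties. It must be recurrent, so that the return times $S_k$ exist, and it must have $\inf_{t\ge0}\delta_\eta(g_tx_1)^d$ close to $\lambda_0$. Recurrent points of $\Omega$ do exist (e.g.\ in a minimal subset), and $\delta_\eta^d\ge\lambda_0$ on $\Omega$ with equality somewhere. But nothing forces a recurrent point to pass near the places where $\lambda_0$ is attained.

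Your argument never uses $\max(m,n)>1$, so the case $d=2$ is a fair test. Take $x_0=u_\alpha\Z^2$ with $\alpha=[0;1^{n_1},2^{m_1},1^{n_2},2^{m_2},\dots]$ and $n_k,m_k\to\infty$. Then $\Omega$ is a heteroclinic cycle between the closed orbits of $\overline{1}$ and $\overline{2}$. The value $\lambda_0=\bigl(1+\sqrt2+\tfrac{\sqrt5-1}{2}\bigr)^{-1}\approx0.330$ is attained only on the connecting orbits. The only recurrent points of $\Omega$ lie on the closed orbits, where the values are $1/\sqrt5$ and $1/\sqrt8\approx0.354$. So for a short interval $I\ni\lambda_0$ no admissible $x_1$ exists, and without it the transition windows stay uncontrolled.

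None of this is needed. Density only asks for one good point in each open $V$, not an open set of good parameters, and the point $A'$ from your shadowing step already works. Indeed $g_Tu_{A'}x=px_0$ with $p=zu^-$, where $z\in Z$ and $u^-\in U^-$ are close to $e$. Hence $g_{T+t}u_{A'}x=q_t\,g_tx_0$, where $q_t:=g_tpg_{-t}=z\,(g_tu^-g_{-t})$ stays uniformly close to $e$ for all $t\ge0$.

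With $\|\cdot\|$ the $\eta$-operator norm, this gives $\|q_t^{-1}\|^{-1}\delta_\eta(g_tx_0)\le\delta_\eta(g_{T+t}u_{A'}x)\le\|q_t\|\,\delta_\eta(g_tx_0)$. Therefore $c^{-d}\lambda_0\le\lambda^F_\eta(u_{A'}x)\le c^d\lambda_0$, where $c=\sup_{t\ge0}\max(\|q_t\|,\|q_t^{-1}\|)$. Shrinking the target neighborhood of $x_0$ makes $c$ as close to $1$ as you like, so $\lambda^F_\eta(u_{A'}x)\in I$.

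This is the paper's proof of Theorem~\ref{thm:dynamical_density}. There it is phrased as $\lambda^F_\eta(u'x)=\lambda^F_\eta(zu^-x_0)=\lambda^F_\eta(zx_0)\in I$, using $F$-invariance, $U^-$-invariance (Lemma~\ref{uminus}) and continuity along $Z$ (Lemma~\ref{ZF}). No induction or recurrence is required.
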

Together with Theorem \ref{thm:full_spectrum}  this  clearly implies Theorem \ref{thm1}, with the additional information that $L_{\rho, \sigma} = L^F_\eta $, and that the closure of $
\mathbb{L}_{\rho, \sigma}$ is equal to $
\mathbb{L}^F_\eta = \left[0,L_{\rho, \sigma}\right]$. Theorem~\ref{thm:density} is proved in \S\ref{density}. 
The last section of the paper lists several open questions.
 
 \medskip 
\noindent{\bf Acknowledgements.} The paper was conceived when the author was visiting the Banach Center in Warsaw, Poland during a special semester on Continued Fractions, Fractal Geometry, Ergodic Theory and Dynamics; the hospitality of the center is gratefully acknowledged. Thanks are also due to  %Yitwah Cheung, 
Konstantin Andritsch, Nikolay Moshchevitin, Vasiliy Neckrasov and Benjamin Ward for helpful discussions.
 
 \medskip 
\noindent{\bf Use of artificial intelligence.} Google Gemini and ChatGPT Pro were used as auxiliary tools
in checking selected calculations and arguments   and  refining the exposition.
The author independently verified every mathematical argument, determined
the final content and wording, and takes full responsibility for the manuscript.

\section{Quantitative Dani correspondence}
The goal of this section is to prove Theorem  \ref{thm:dynamical_formula}. That is, we are given $m,n\in\N$, a matrix $A \in M_{m,n}(\R)$ and a pair of norms $\rho, \sigma$ on $\R^m$ and $\R^n$. Then we set $d = m+n$ and, with $F$ as in   \equ{defF} and $\eta$ as in   \equ{defeta}, define the dynamical Lagrange constant  $$\lambda^F_\eta(u_A\Z^d):= \liminf_{t\to\infty}\delta_\eta(g_tu_A\Z^d)^d,$$ and aim to prove that it is equal to $\lambda_{\rho,\sigma}(A)$. Note that when $\lambda_{\rho,\sigma}(A) = 0$, that is, for well approximable $A$,  it follows from \cite{dani} that $\lambda^F_\eta(u_A\Z^d) = 0$. Thus for the proof we may assume that $A\in \mathbf{BA}_{m,n}$. The argument below is a routine modification of the   way  the original correspondence was established by Dani.

\begin{proof}[Proof of Theorem  \ref{thm:dynamical_formula}]
%Let ${x}_t := g_t x_A$. 
Note that the non-zero vectors of $x_A$ are precisely of the form
\[
\mathbf{v}_{\mathbf{p}, \mathbf{q}} := \begin{pmatrix} A\mathbf{q} - \mathbf{p} \\ \mathbf{q} \end{pmatrix} \quad \text{for } (\mathbf{p}, \mathbf{q}) \in (\Z^m \times \Z^n )\nz .
\]
The non-zero vectors of $g_t x_A$ are thus given by
\[
g_t \mathbf{v}_{\mathbf{p}, \mathbf{q}} = \begin{pmatrix} e^{t/m} (A\mathbf{q} - \mathbf{p}) \\ e^{-t/n} \mathbf{q} \end{pmatrix} \quad \text{for } (\mathbf{p}, \mathbf{q}) \in (\Z^m \times \Z^n )\nz .
\]
Using %the definition of $\|\cdot\|_{\rho,\sigma}$, 
\equ{defeta}, \equ{defdelta} and \equ{defF},  we have
\eq{deltaexplained}{
\delta_{\eta}(g_t x_A) = \inf_{(\mathbf{p},\mathbf{q}) \ne (\mathbf{0},\mathbf{0})} \max\left( e^{t/m} \rho(A\mathbf{q} - \mathbf{p}), \; e^{-t/n} \sigma(\mathbf{q}) \right).
}
Note that when $\mathbf{q} = \mathbf{0}$, for any $\mathbf{p} \in \Z^m \nz$ we have $e^{t/m} \rho(-\mathbf{p}) \to \infty$ as $t \to \infty$. Thus, for all sufficiently large $t$, the infimum above is attained at points with $\mathbf{q} \ne \mathbf{0}$.

For each $\mathbf{q} \in \Z^n \nz$, 
%set $$c_{\mathbf{q}} := \dist_\rho(A\mathbf{q}, \Z^m) = \min_{\vp \in \Z^m} \rho(A\mathbf{q} - \vp);$$ 
%it is always positive since $A$ was assumed to be badly approximable. Also 
define $$f_{\mathbf{q}}(t) := \min_{\vp \in \Z^m}\eta(g_t \mathbf{v}_{\mathbf{p}, \mathbf{q}}) = \max\left( e^{t/m} \dist_\rho(A\mathbf{q}, \Z^m), \; e^{-t/n} \sigma(\mathbf{q}) \right).$$

%If $c_{\mathbf{q}} = 0$ for infinitely many $\mathbf{q} \in \Z^n \nz$, then clearly $\lambda_{\rho,\sigma}(A) = 0$. In this case, for any fixed $t > 0$, taking $\mathbf{q}$ with $c_{\mathbf{q}} = 0$ and $\sigma(\mathbf{q}) \to 0$ does not apply, but sending $t \to \infty$ for a fixed $\mathbf{q}$ with $c_{\mathbf{q}} = 0$ yields $f_{\mathbf{q}}(t) = e^{-t/n}\sigma(\mathbf{q}) \to 0$. Thus $\liminf_{t \to \infty} \delta_{\eta}(g_t x_A)^{m+n} = 0$, and \equ{dynformula} holds trivially.

%Otherwise, 
Since we assumed that $A\in \mathbf{BA}_{m,n}$, it follows that $\dist_\rho(A\mathbf{q}, \Z^m)\ne 0$ unless $\mathbf{q} = 0$. Thus for all  $\mathbf{q}$ with large enough $\sigma(\mathbf{q})$, 
the function $f_{\mathbf{q}}$ attains its strict minimum %on $[0,\infty)$ 
at \eq{tq}{t_{\mathbf{q}} := \frac{mn}{d} \log\left(\frac{\sigma(\mathbf{q})}{\dist_\rho(A\mathbf{q}, \Z^m)}\right),} with
%\eq{fq_min}{
$$
f_{\mathbf{q}}(t_{\mathbf{q}})^{d} %= c_{\mathbf{q}}^m \sigma(\mathbf{q})^n 
= \sigma(\mathbf{q})^n \dist_\rho(A\mathbf{q}, \Z^m)^m > 0.
$$
It is clear from \equ{tq} and  the finiteness of the covering radius of $\Z^m$ with respect to the norm $\rho$ that $t_{\mathbf{q}} \to \infty$ as $\mathbf{q} \to \infty$. We now prove two complementary inequalities to establish $ \lambda_{\rho,\sigma}(A) = \lambda^F_\eta(u_A\Z^d)$.

\medskip
\noindent\textbf{Proof of $\boxed{\ge}$:} 
For every $t \ge 0$ and every $\mathbf{q} \in \Z^n \nz$, we have the pointwise bound $\delta_{\eta}(g_t x_A) \le f_{\mathbf{q}}(t)$. %Restricting to $\mathbf{q}$ with $c_{\mathbf{q}} > 0$ and e
Evaluating at $t = t_{\mathbf{q}}$ gives
\[
\delta_{\eta}(g_{t_{\mathbf{q}}} x_A)^{d} \le f_{\mathbf{q}}(t_{\mathbf{q}})^{d} = \sigma(\mathbf{q})^n \dist_\rho(A\mathbf{q}, \Z^m)^m.
\]
%If $\lambda_{\rho,\sigma}(A) = 0$, then either $c_{\mathbf{q}} = 0$ infinitely often (handled above) or there exists a sequence $\mathbf{q}_k \to \infty$ with $c(\mathbf{q}_k) > 0$ such that $\sigma(\mathbf{q}_k)^n c(\mathbf{q}_k)^m \to 0$. In the latter case, $t_{\mathbf{q}_k} \to \infty$, yielding
%\[
%\liminf_{t \to \infty} \delta_{\eta}(g_t x_A)^{m+n} \le \lim_{k \to \infty} \delta_{\eta}({x}_{t_{\mathbf{q}_k}})^{m+n} \le \lim_{k \to \infty} \sigma(\mathbf{q}_k)^n c(\mathbf{q}_k)^m = 0.
%\]
 %we assumed $\lambda_{\rho,\sigma}(A) > 0$, %then $A \in \mathbf{BA}_{m,n}$, so $c_{\mathbf{q}} \ge C \sigma(\mathbf{q})^{-n/m} > 0$ for all $\mathbf{q} \ne \mathbf{0}$. Then 
%it follows that 
%$t_{\mathbf{q}} \to \infty$ as $\mathbf{q} \to \infty$, t
Taking the limit inferior over $\mathbf{q} \to \infty$ yields
\[
\liminf_{t \to \infty} \delta_{\eta}(g_t x_A)^{d} \le \liminf_{\mathbf{q} \to \infty} \delta_{\eta}(g_{t_{\mathbf{q}}}x_A)^{d} \le \liminf_{\mathbf{q} \to \infty} \sigma(\mathbf{q})^n \dist_\rho(A\mathbf{q}, \Z^m)^m = \lambda_{\rho,\sigma}(A).
\]

\medskip
\noindent\textbf{Proof of $\boxed{\le}$:} 
Let $L := \lambda^F_\eta(u_A\Z^d)$. For any $\varepsilon > 0$ choose a sequence $t_k\to\infty$ such that $\delta_{\eta}(g_{t_k} x_A)^{d} \le L + \varepsilon$ for all $k\in\N$. In view of \equ{deltaexplained} and the remark following it, for every $k$ one can choose $\vq_k\in\Z^n\nz$ such that 
$$\max\left( e^{t_k/m} \dist_\rho(A\mathbf{q}_k, \Z^m), \; e^{-t_k/n} \sigma(\mathbf{q}_k) \right) \le ( L + \varepsilon)^{1/d}, $$
or, equivalently,
\eq{system}{\begin{cases} \dist_\rho(A\mathbf{q}_k, \Z^m)&\le e^{-t_k/m} ( L + \varepsilon)^{1/d},\\ \sigma(\mathbf{q}_k)  &\le e^{t_k/n} ( L + \varepsilon)^{1/d}.\end{cases} }
Because we assumed that $A\mathbf{q}\notin\Z^m$ whenever $\vq \in\Z^n\nz$, it follows from the first inequality in \equ{system} that $\vq_k\to\infty$ as $k\to\infty$. Now we can raise the first (resp., second) inequality in \equ{system}  to the $m$th (resp., $n$th) power and multiply   to get 
$$
\dist_\rho(A\mathbf{q}_k, \Z^m)^m \sigma(\mathbf{q}_k)^n \le e^{-t_k} ( L + \varepsilon)^{m/d} \cdot e^{t_k} ( L + \varepsilon)^{n/d}  = L + \varepsilon.
$$
Hence $\liminf_{\vq\to\infty}\dist_\rho(A\mathbf{q}, \Z^m)^m \sigma(\mathbf{q})^n \le L + \varepsilon$, proving that $\lambda_{\rho,\sigma}(A) \le L$.
\end{proof}

\section{Dynamical Lagrange spectrum}\label{dynlagr}

In this section our goal is to prove Theorem \ref{thm:full_spectrum}. We start with  a  norm $\eta$ on $\R^d$, let $F = \{g_t: t \ge 0\} \subset G$ be a one-parameter subsemigroup, and define  the Lagrange constants $\lambda^F_\eta(x)$ for $x\in X_d$ and the dynamical \ls\ $\mathbb{L}^F_\eta$ by 
\equ{lambdadyn} and \equ{Ldyn} respectively. Clearly (just by looking at periodic orbits)  bounded $F$-trajectories on $X_d$ always exist, hence $
L^F_\eta= \sup
\mathbb{L}^F_\eta > 0$. Our first observation is the fact that $
L^F_\eta$ is attained as a value of $\lambda^F_\eta$.

\begin{proposition}\label{topspectrum} For any one-parameter subsemigroup $F = \{g_t: t \ge 0\} \subset G$ one has  $L^F_\eta \in \mathbb{L}^F_\eta$.
\end{proposition}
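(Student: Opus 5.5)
The plan is a compactness and limiting argument that produces a point with the supremal value, using an orbit-closure/limit-point construction. Take a sequence $x_k\in X_d$ with $\lambda^F_\eta(x_k)\to L:=L^F_\eta$, and we may assume $\lambda_k:=\lambda^F_\eta(x_k) > L - 1/k >0$. For each $k$ the trajectory $Fx_k$ is bounded, and for large $t$ we have $\delta_\eta(g_tx_k)^d\ge \lambda_k-1/k$. Replacing $x_k$ by $g_{s_k}x_k$ with $s_k$ large does not change $\lambda^F_\eta$, since the liminf is shift invariant. So we may assume
\[
\delta_\eta(g_tx_k)^d\ge L-2/k \quad\text{for all } t\ge 0 .
\]
In particular all $x_k$ lie in the compact set $K=\{x:\delta_\eta(x)^d\ge L/2\}$, by Mahler's criterion and the continuity of $\delta_\eta$ for large $k$.

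Next pass to a convergent subsequence $x_k\to y\in K$. For every fixed $t\ge0$, the continuity of the action and of $\delta_\eta$ gives
\[
\delta_\eta(g_ty)^d=\lim_k\delta_\eta(g_tx_k)^d\ge L .
\]
Hence $\lambda^F_\eta(y)\ge L$, and since $L$ is the supremum, $\lambda^F_\eta(y)=L\in\mathbb{L}^F_\eta$.

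The step to check carefully is the shift: we need that $\liminf_{t\to\infty}\delta_\eta(g_tx_k)^d=\lambda_k$ yields a time $s_k$ after which the inequality $\delta_\eta(g_{t}x_k)^d\ge\lambda_k-1/k$ holds for all $t\ge s_k$. This is just the definition of liminf, and $g_tg_{s_k}=g_{t+s_k}$ because $F$ is a one-parameter semigroup. The remaining ingredient is the uniform lower bound on $\delta_\eta$, which keeps the points in a compact set so that a limit exists. I do not expect a genuine obstacle here. The only subtlety is that the inequality is required for all $t\ge0$ rather than only asymptotically, and the time shift arranges exactly that. No other assumption on $F$ (diagonal or otherwise) is used.
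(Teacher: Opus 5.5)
Your proposal is correct and is essentially the paper's argument. The paper likewise shifts each near-optimal $x_k$ by $g_{T_k}$ so that $\delta_\eta(g_t x_k')^d \ge L - 1/k$ holds for all $t \ge 0$, uses Mahler's criterion to extract a convergent subsequence, and passes to the limit pointwise in $t$ to get $\lambda^F_\eta(x_0) \ge L$. (Your implicit assumption $L>0$ is harmless: the paper notes $L>0$, and if $L=0$ the statement holds trivially.)
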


\begin{proof}
Let $L := L^F_\eta = \sup_{x \in X_d} \lambda^F_\eta(x)$. 
For each $k \in \mathbb{N}$ with $k > 1/L$ choose a lattice $x_k \in X_d$ such that
\[
\lambda^F_\eta(x_k) =  \liminf_{t\to\infty}\delta_\eta(g_tx_k)^d > L - \frac{1}{2k},
\]
and then take  $T_k > 0$ such that 
\[
\delta_\eta(g_t x_k)^d \ge \lambda^F_\eta(x_k) - \frac{1}{2k} > L - \frac{1}{k}
\]
for all $t \ge T_k$.

Define the shifted lattices $x'_k := g_{T_k} x_k \in X_d$. Then for every $k > 1/L$ and every $t \ge 0$  we have %$t + T_k \ge T_k$, and therefore
\begin{equation} \label{eq:uniform_bound}
\delta_\eta(g_t x'_k)^d = \delta_\eta(g_{t + T_k} x_k)^d \ge L - \frac{1}{k}.
\end{equation}
%In particular, setting $s = 0$ in \eqref{eq:uniform_bound} yields
%\[
%\delta_\eta(x'_k) \ge \left(L - \frac{1}{m}\right)^{1/d} \ge \left(\frac{L}{2}\right)^{1/d} > 0
%\]
%for all $m \ge m_0$, where $m_0$ is chosen sufficiently large such that $L - 1/m_0 \ge L/2$. 
By Mahler's Compactness Criterion, the set of lattices $\{x'_k\}_{k > 1/L}$ %is bounded away from the cusp and hence 
lies in a compact subset of $X_d$. Consequently, after passing to a subsequence if necessary, $x'_k$ converges to a limit lattice $x_0 \in X_d$ as $k \to \infty$.

Note that %the action %of 
the flow 
$x \mapsto g_t x$ and the shortest vector function $\delta_\eta(\cdot)$ are continuous
on $X_d$. 
For any fixed  $t \ge 0$, passing to the limit as $k \to \infty$ in \eqref{eq:uniform_bound} yields
\[
\delta_\eta(g_t x_0)^d = \lim_{k \to \infty} \delta_\eta(g_t x'_k)^d \ge \lim_{k \to \infty} \left(L - \frac{1}{k}\right) = L.
\]
Since  $t \ge 0$   was arbitrary, taking the limit inferior as $t \to \infty$ gives
\[
\lambda^F_\eta(x_0) = \liminf_{t \to \infty} \delta_\eta(g_t x_0)^d \ge L.
\]
On the other hand, $L = \sup \mathbb{L}^F_\eta$ implies $\lambda^F_\eta(x_0) \le L$. Thus, $\lambda^F_\eta(x_0) = L$, which confirms that $L^F_\eta \in \mathbb{L}^F_\eta$.
\end{proof}

Our next goal is to present an alternative definition of $\lambda^F_\eta(x)$.

\begin{proposition} \label{equiv1} Let $x\in X_d$. Then $\lambda^F_\eta(x)$ is the smallest real number $c$ with the following property: 
\eq{apprvectors}{\begin{aligned}\exists\,\text{sequences  }t_k \to \infty  \text{ and  }    &\mathbf{v}_k\in g_{t_k}x\nz  \text{ such that  }\lim_{k \to \infty} \eta(\mathbf{v}_k)^d = c.\end{aligned}}
Moreover, when $c = \lambda^F_\eta(x)$ the vectors $\mathbf{v}_k$ in \equ{apprvectors} can be chosen to be shortest nonzero vectors of $g_{t_k}x$.
\end{proposition}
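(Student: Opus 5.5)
The claim has two halves. First, $c=\lambda^F_\eta(x)$ satisfies \equ{apprvectors}, and the vectors can be taken to be shortest vectors. Second, no smaller $c$ satisfies \equ{apprvectors}. Throughout, write $\lambda:=\lambda^F_\eta(x)$ and use the definition \equ{lambdadyn}, $\lambda=\liminf_{t\to\infty}\delta_\eta(g_tx)^d$, together with the fact noted after \equ{defdelta} that the infimum defining $\delta_\eta$ is attained.

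\emph{Attainment.} By the definition of $\liminf$, choose $t_k\to\infty$ with $\delta_\eta(g_{t_k}x)^d\to\lambda$. Since $\delta_\eta$ is bounded on $X_d$, $\lambda$ is a finite number, so this limit makes sense. For each $k$ let $\mathbf{v}_k\in g_{t_k}x\nz$ be a vector realizing the minimum in \equ{defdelta}. Then $\eta(\mathbf{v}_k)^d=\delta_\eta(g_{t_k}x)^d\to\lambda$. Hence $c=\lambda$ has property \equ{apprvectors}, with shortest vectors, which also gives the ``moreover'' clause.

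\emph{Minimality.} Suppose $c$ has property \equ{apprvectors}, witnessed by $t_k\to\infty$ and $\mathbf{v}_k\in g_{t_k}x\nz$ with $\eta(\mathbf{v}_k)^d\to c$. Every nonzero vector of a lattice is at least as long as its shortest vector, so $\delta_\eta(g_{t_k}x)^d\le\eta(\mathbf{v}_k)^d$ for all $k$. Taking $\liminf$ along $k$, and using that the $\liminf$ over the full parameter $t\to\infty$ is at most the $\liminf$ along any subsequence $t_k\to\infty$, we get
$$\lambda\le\liminf_{k\to\infty}\delta_\eta(g_{t_k}x)^d\le\lim_{k\to\infty}\eta(\mathbf{v}_k)^d=c.$$
So $\lambda$ is a lower bound for every $c$ with property \equ{apprvectors}, and by the attainment step it has that property itself. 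It is therefore the smallest such number.

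There is no real obstacle here; the argument is a direct unpacking of the definitions. The only points to state explicitly are the finiteness of $\lambda$, which follows from the boundedness of $\delta_\eta$, and the fact that the infimum in \equ{defdelta} is achieved. The latter is what makes the ``moreover'' clause hold exactly, rather than only up to $\varepsilon$.
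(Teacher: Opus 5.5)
Your proof is correct and follows the paper's argument essentially verbatim: you obtain attainment by choosing $t_k$ realizing the $\liminf$ and taking shortest vectors, and you obtain minimality from $\delta_\eta(g_{t_k}x)\le\eta(\mathbf{v}_k)$ combined with the fact that the full $\liminf$ is at most the $\liminf$ along $t_k$. Your explicit remarks on the finiteness of $\lambda$ and on the attainment of the infimum in \equ{defdelta} are appropriate.
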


\begin{proof}
%Recall that by definition, $\delta_\eta(y) = \inf_{\mathbf{v} \in y \nz} \eta(\mathbf{v})$ for any lattice $y \in X_d$. Because every lattice $y \in X_d$ is discrete, the infimum is always attained at some non-zero vector of $y$.
Let $S$ denote the set of real numbers $c$ satisfying \equ{apprvectors}.
%or which there exist sequences $t_k \to \infty$ and $\mathbf{v}_k \in g_{t_k}x \nz$ with $\delta_\eta(g_{t_k}x) = \eta(\mathbf{v}_k)$ such that $\lim_{k \to \infty} \eta(\mathbf{v}_k) = c$. 
First, we show that $$\lambda^F_\eta(x)
 = \liminf_{t \to \infty} \delta_\eta(g_t x)^d$$ 
is an element of $S$. %By definition of the limit inferior,
%\[
%\lambda^F_\eta(x)^{1/d} = \liminf_{t \to \infty} \delta_\eta(g_t x).
%\]
%By the properties of $\liminf$ for continuous real-valued functions, 
%there exists 
Choose a sequence %of times 
$t_k \to \infty$ such that $\lim_{k \to \infty} \delta_\eta(g_{t_k}x)^d = \lambda^F_\eta(x)$. For each $k \in \N$, select %a shortest non-zero vector 
$\mathbf{v}_k \in g_{t_k}x $ satisfying $\eta(\mathbf{v}_k) = \delta_\eta(g_{t_k}x)$. Taking the limit as $k \to \infty$ gives $\lim_{k \to \infty} \eta(\mathbf{v}_k)^d = \lambda^F_\eta(x)$, which proves $\lambda^F_\eta(x) \in S$ and realizes   $\mathbf{v}_k$ as shortest nonzero vectors of $g_{t_k}x$.

Next, let $c \in S$ be arbitrary, and let $t_k \to \infty$ and $\mathbf{v}_k \in g_{t_k}x \nz$ be sequences as in  \equ{apprvectors}.
%, so that %$\delta_\eta(g_{t_k}x) = \eta(\mathbf{v}_k)$ and 
%$\lim_{k \to \infty} \eta(\mathbf{v}_k)^d = c$. 
Since $\delta_\eta(g_t x) \le \eta(\mathbf{v})$ for any $\mathbf{v} \in g_t x \nz$, evaluating along the sequence $t_k$ gives
\[
\lambda^F_\eta(x) = \liminf_{t \to \infty} \delta_\eta(g_t x)^d \le \liminf_{k \to \infty} \delta_\eta(g_{t_k}x)^d \le \lim_{k \to \infty} \eta(\mathbf{v}_k)^d = c.
\]
Thus, $\lambda^F_\eta(x)$ is indeed the minimum of the set $S$. 
%Taking the $d$-th power gives the desired characterization of $\lambda^F_\eta(x)$.
\end{proof}

In the next proposition we assume $\lambda^F_\eta(x) > 0$; equivalently, that the trajectory $Fx$ is bounded in $X_d$

\begin{proposition} \label{equiv2} Let $x\in X_d$ be such that $\lambda^F_\eta(x) > 0$. Then $\lambda^F_\eta(x)$ is the smallest real number $c$ with the following property: 
\eq{apprlattices}{\begin{aligned}
%\exists\,\text{a sequence of times }t_k \to \infty, \text{ a sequence of shortest nonzero vectors }    &\mathbf{v}_k\text{  of }g_{t_k}x ,
\exists\,\text{sequences  }t_k \to \infty  \text{ and  }    \mathbf{v}_k\in g_{t_k}x\nz,  
\text{ a lattice }x_\infty\in X_d,  \text{ and  } \mathbf v\in x_\infty   \\ \text{such that }  x_\infty = \lim_{k \to \infty} g_{t_k}x,\ \mathbf{v} = \lim_{k \to \infty} \mathbf{v}_k, \text{ and } \delta_\eta( x_\infty)^d= \eta(\mathbf{v})^d = c.\ \quad\end{aligned}}
Moreover, when $c = \lambda^F_\eta(x)$ the vectors $\mathbf{v}_k$ in \equ{apprlattices} can be chosen to be shortest nonzero vectors of $g_{t_k}x$.
\end{proposition}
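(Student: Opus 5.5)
Let $S'$ be the set of real numbers $c$ satisfying \equ{apprlattices}. I will show two things: $\lambda^F_\eta(x)\in S'$, realized by shortest vectors, and $c\ge \lambda^F_\eta(x)$ for every $c\in S'$. The second is immediate from Proposition \ref{equiv1}. If $c\in S'$ with sequences $t_k$, $\mathbf{v}_k$, then $\mathbf{v}_k\to\mathbf v$. By continuity of $\eta$ this gives $\eta(\mathbf{v}_k)^d\to\eta(\mathbf v)^d=c$, so $c$ satisfies \equ{apprvectors} and hence $c\ge\lambda^F_\eta(x)$.

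For membership, I start as in the proof of Proposition \ref{equiv1}. Choose $t_k\to\infty$ with $\delta_\eta(g_{t_k}x)^d\to\lambda^F_\eta(x)$, and let $\mathbf{v}_k$ be shortest nonzero vectors of $g_{t_k}x$. Since $\lambda^F_\eta(x)>0$, the trajectory $Fx$ is bounded, meaning $\inf_{t\ge 0}\delta_\eta(g_tx)>0$. So by Mahler's Compactness Criterion the points $g_{t_k}x$ lie in a compact subset of $X_d$. After passing to a subsequence, $g_{t_k}x\to x_\infty\in X_d$. Continuity of $\delta_\eta$ then gives $\delta_\eta(x_\infty)^d=\lim_k\delta_\eta(g_{t_k}x)^d=\lambda^F_\eta(x)$.

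The remaining step, and the only point needing care, is extracting a limit vector $\mathbf v\in x_\infty$. The vectors $\mathbf v_k$ satisfy $\eta(\mathbf v_k)=\delta_\eta(g_{t_k}x)$, which is bounded, so after a further subsequence $\mathbf v_k\to\mathbf v\in\R^d$. I then need to show $\mathbf v\in x_\infty$. To do this, write $x_\infty=h\Z^d$ and, using convergence in $G/\Gamma$, choose representatives $g_{t_k}x=h_k\Z^d$ with $h_k\to h$ in $G$. Then $\mathbf v_k=h_k\mathbf n_k$ with $\mathbf n_k\in\Z^d$. The integer vectors $\mathbf n_k=h_k^{-1}\mathbf v_k$ converge to $h^{-1}\mathbf v$, so they are eventually constant, equal to some $\mathbf n\in\Z^d$. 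Hence $\mathbf v=h\mathbf n\in x_\infty$. Moreover, by continuity $\eta(\mathbf v)=\lim_k\eta(\mathbf v_k)=\delta_\eta(x_\infty)>0$, so $\mathbf v\neq 0$ and it is in fact a shortest vector of $x_\infty$. This gives $\delta_\eta(x_\infty)^d=\eta(\mathbf v)^d=\lambda^F_\eta(x)$, so $\lambda^F_\eta(x)\in S'$ with $\mathbf v_k$ shortest, and hence $\lambda^F_\eta(x)=\min S'$.
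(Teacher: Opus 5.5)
Your proof is correct and follows essentially the same route as the paper's: you use boundedness of $Fx$ and Mahler's criterion to extract a limit $x_\infty$ of $g_{t_k}x$ along with a limit $\mathbf v$ of the shortest vectors $\mathbf v_k$, and you get minimality by comparing $\eta(\mathbf v_k)$ with $\delta_\eta(g_{t_k}x)$. The differences are minor. You spell out the step $\mathbf v\in x_\infty$ using representatives $h_k\to h$ in $G$, which the paper simply attributes to ``convergence of lattices.'' You also obtain minimality by noting that \equ{apprlattices} implies \equ{apprvectors} and citing Proposition~\ref{equiv1}, instead of repeating the inequality chain.
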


\begin{proof}
Let $S'$ denote the set of real numbers $c$ satisfying \equ{apprlattices}. We first show that $\lambda^F_\eta(x) $ is an element of  $S'$. 
By Proposition \ref{equiv1}, there exist sequences $t_k \to \infty$ and shortest non-zero vectors  
 $\mathbf{v}_k$ of  $g_{t_k}x$ such that %$\eta(\mathbf{v}_k) = \delta_\eta(g_{t_k}x)$ for all $k \in \N$, and 
 \eq{limit}{\lim_{k \to \infty} \eta(\mathbf{v}_k)^d = \lambda^F_\eta(x).} Since $\lambda^F_\eta(x) > 0$, the trajectory $Fx = \{g_t x : t \ge 0\}$ is bounded in $X_d$ by Mahler's Compactness Criterion. Therefore the sequence $\{g_{t_k} x\}$ lies in a compact subset of $X_d$. Passing to a subsequence if necessary, we may assume that $g_{t_k} x \to x_\infty$ for some lattice $x_\infty \in X_d$. 

Moreover, %since $\inf_k\delta_\eta(g_{t_k}x)  > 0$, 
in view of \equ{limit} the vectors $\mathbf{v}_k$ all lie in a compact subset of $\R^d\nz$. Passing to a further subsequence, we obtain $\mathbf{v}_k \to \mathbf{v}$ for some non-zero vector $\mathbf{v} \in \R^d$. By the convergence of lattices, $\mathbf{v} \in x_\infty \nz$. The continuity of $\eta$ yields:
\[
\eta(\mathbf{v})^d = \lim_{k \to \infty} \eta(\mathbf{v}_k)^d = \lambda^F_\eta(x).
\]
Furthermore, %since every $\mathbf{v}_k$ was chosen shortest, 
the continuity of the minimum vector length function $\delta_\eta$ gives 
$$\delta_\eta(x_\infty) = \lim_{k \to \infty} \delta_\eta(g_{t_k}x) = \lim_{k \to \infty}\eta(\mathbf{v}_k)= \eta(\mathbf{v}),$$ proving that $\mathbf{v}$ is a shortest non-zero vector of $x_\infty$ and that $\lambda^F_\eta(x) \in S'$. %Note that by construction each   $\mathbf{v}_k$ is a shortest nonzero vector of $g_{t_k}x$.

To show minimality, let $c \in S'$ be arbitrary with associated data $$t_k \to \infty , \ \mathbf{v}_k \in g_{t_k}x \nz , \ x_\infty \in X_d, \text{ and }\mathbf{v} \in x_\infty \nz$$ satisfying \equ{apprlattices}. %Since $\mathbf{v}_k$ is a shortest vector of $g_{t_k}x$, we have $\delta_\eta(g_{t_k}x) = \eta(\mathbf{v}_k)$. 
Taking the limit as $k \to \infty$, we obtain
\[
c = \eta(\mathbf{v})^d = \lim_{k \to \infty} \eta(\mathbf{v}_k)^d \ge \lim_{k \to \infty} \delta_\eta(g_{t_k}x)^d \ge \liminf_{t \to \infty} \delta_\eta(g_t x)^d = \lambda^F_\eta(x).
\]
Thus, $\lambda^F_\eta(x) \le c$ for all $c \in S'$, establishing that $\lambda^F_\eta(x) = \min S'$.
\end{proof}

For the next step let us take $F = \{g_t: t \ge 0\} \subset G$ as in Theorem \ref{thm:full_spectrum}: namely $$g_t = \diag\left( e^{w_1 t},\dots, e^{w_d t}\right),$$
where $w_1,\dots,w_d\in\R$ are such that \equ{equaleigenvalues} holds.
%\begin{itemize}
%\item[\rm (i)] $w_i \ne 0$ for all $i$, and 
%\item[\rm (ii)]  $w_i = w_j$ for some $i\ne j$.
%\end{itemize}
Denote by $Z$ the centralizer of $F$ in $G$. A useful feature of our choice of $F$  is that  $Z$ is quite large. Indeed, \equ{equaleigenvalues} implies that a copy of $\SL_2(\R)$ acting in the plane spanned by $i$th and $j$th standard basis vectors of $\R^d$  commutes with $F$. This is used for the proof of the following lemma:

\begin{lemma}\label{commuting} Let $F$ be as in Theorem  \ref{thm:full_spectrum}, and let $\mathbf{v}\in\R^d\nz$. Then there exists a  smooth map $$\R_{>0}\to Z, \ s\mapsto h_s,$$ a vector $\mathbf{v}' \in\R^d$ and a $(d-1)$-dimensional  subspace $V\subset \R^d$ containing $\mathbf{v}$ such that 
\begin{itemize}
\item $\mathbf{v}'$ is an eigenvector of $h_s$ with eigenvalue $s^{-(d-1)}$; 
\item $V$ is an eigenspace of $h_s$ with eigenvalue $s$.
\end{itemize}\end{lemma}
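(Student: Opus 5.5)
The plan is to build $h_s$ directly as a diagonalizable linear map whose eigenspaces are invariant under $F$. Such a map automatically commutes with every $g_t$.

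First, decompose $\R^d=\bigoplus_{\ell} E_\ell$ into the distinct eigenspaces of the diagonal matrices $g_t$. Each $E_\ell$ is spanned by the standard basis vectors $e_k$ sharing a common weight $w_k$. By \equ{equaleigenvalues}, one of these eigenspaces, call it $E=E_{\ell_0}$, contains $e_i$ and $e_j$, so $\dim E\ge 2$. A linear map preserving each $E_\ell$ commutes with all $g_t$. More generally, any map that acts by scalars on a direct sum decomposition into $g_t$-invariant subspaces lies in the centralizer of $F$.

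Next, write $\mathbf{v}=\sum_\ell \mathbf{v}_\ell$ with $\mathbf{v}_\ell\in E_\ell$. Since $\dim E\ge 2$, I choose a hyperplane $W\subset E$ (of dimension $\dim E-1\ge 1$) containing $\mathbf{v}_{\ell_0}$; if $\mathbf{v}_{\ell_0}=0$, any hyperplane will do. I pick $\mathbf{v}'\in E\smallsetminus W$ and set
\[
V:=W\oplus\bigoplus_{\ell\ne\ell_0}E_\ell .
\]
This $V$ is a $(d-1)$-dimensional subspace. It contains $\mathbf{v}$ because each component of $\mathbf{v}$ lies in it, and $\R^d=V\oplus\R\mathbf{v}'$. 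Both $V$ and $\R\mathbf{v}'$ are $g_t$-invariant, since each is a sum of subspaces of the eigenspaces $E_\ell$.

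Finally, I define $h_s$ to be $s\cdot\mathrm{Id}$ on $V$ and multiplication by $s^{-(d-1)}$ on $\R\mathbf{v}'$. Then $\det h_s=s^{d-1}s^{-(d-1)}=1$, so $h_s\in G$. The map $s\mapsto h_s$ is smooth, since in a fixed basis adapted to $V\oplus\R\mathbf{v}'$ it is a conjugate of $\diag(s,\dots,s,s^{-(d-1)})$. Moreover $h_s$ commutes with each $g_t$, because $g_t$ preserves both summands and $h_s$ acts on each of them by a scalar. Hence $h_s\in Z$, and the two eigen-properties hold by construction.

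The only real point is the choice of $W$ inside the multidimensional eigenspace $E$. This is exactly where \equ{equaleigenvalues} is used: if all weights were distinct, every $F$-invariant hyperplane would be a coordinate hyperplane, and it could fail to contain $\mathbf{v}$. I do not expect any genuine obstacle beyond this observation.
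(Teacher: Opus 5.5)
Your proof is correct and is essentially the paper's argument. The paper picks the coordinate plane $\Span\{\mathbf{e}_i,\mathbf{e}_j\}$ with $w_i=w_j$ and splits it as $\R\mathbf{u}\oplus\R\mathbf{u}'$, where $\mathbf{u}$ is the projection of $\mathbf{v}$ (arbitrary if that projection vanishes); it then sets $h_s=s\,h'_s$, with $h'_s$ acting by $\diag(1,s^{-d})$ on that plane and trivially elsewhere, which is exactly your ``$s$ on $V$, $s^{-(d-1)}$ on $\R\mathbf{v}'$'' recipe, and your use of the whole weight space $E$ with an arbitrary hyperplane $W\subset E$ containing $\mathbf{v}_{\ell_0}$ is only a cosmetic generalization of that choice.
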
 

\begin{proof}
By \equ{equaleigenvalues}, there exist distinct indices $i, j \in \{1, \dots, d\}$ such that $w_i = w_j$. Renumbering coordinates if necessary and adjusting the norm $\eta$ accordingly, we may assume without loss of generality that $w_1 = w_2 = w$. 

Write $\mathbf{v} = (v_1, v_2, \dots, v_d)^T \in \R^d \nz$. We consider two cases based on the first two components of $\mathbf{v}$:

\smallskip
\noindent\textbf{Case 1:} 
 $\mathbf{u} := \begin{pmatrix} v_1 \\  v_2 \end{pmatrix}\ne \begin{pmatrix} 0 \\  0 \end{pmatrix}$. Complete $\mathbf{u}$ to a basis $(\mathbf{u}, \mathbf{u}')$ of $\R^2$, %where $\mathbf{u}' := \begin{pmatrix} v'_1 \\  v'_2 \end{pmatrix}$, 
 and  for any  $s > 0$ let $\theta_s \in \GL_2(\R)$ be the linear map defined by
\eq{deftheta}{
\theta_s \mathbf{u} = \mathbf{u} \quad \text{and} \quad \theta_s \mathbf{u}' = s^{-d} \mathbf{u}'.
}
%Since $A\theta$ has matrix $\diag(1, s^{-d})$ relative to the basis $(\mathbf{u}, \mathbf{u}')$, 
\noindent\textbf{Case 2:} $(v_1, v_2) = (0, 0)$. 
In this case we can arbitrarily choose a basis $(\mathbf{u}, \mathbf{u}')$ of $\R^2$ and define $\theta_s \in \GL_2(\R)$ by \equ{deftheta};   for instance, let $\theta_s = \diag(1, s^{-d})$. 
%Then the matrix $h_s := s h_\theta$ commutes with $F$, lies in $\SL_d(\R)$, and satisfies $h_s \mathbf{v} = s \mathbf{v}$ because $\theta (0,0)^T = (0,0)^T$; hence   the conclusions of the lemma hold.
\smallskip

Clearly in both cases we have $\det \theta_s = s^{-d}$. Embed $\theta_s$ into $M_{d,d}(\R)$ as a block-diagonal matrix:
\[
h'_s := \begin{pmatrix} \theta_s & 0_{2 \times (d-2)} \\ 0_{(d-2) \times 2} & I_{d-2} \end{pmatrix} \in \GL_d(\R).
\]
Let $(\mathbf{e}_1,\dots, \mathbf{e}_d)$ denote the standard basis of $\R^d$.
Because $w_1 = w_2 = w$,   $g_t$ acts as $e^{wt} I_2$ on the subspace $\Span\{\mathbf{e}_1, \mathbf{e}_2\}$, hence $h'_s$ commutes with $g_t$ for all $t$.
Now define $$h_s := s h'_s \in \GL_d(\R)$$ and check that $h_s$ satisfies all the required conditions. Clearly $$\det(h_s) = \det(s h_\theta) = s^d \det(h_\theta) = s^d \cdot s^{-d} = 1,$$ thus $h_s \in G$. Also, by construction, $h_s$ has a one-dimensional eigenspace spanned by 
$\mathbf{v}' := \big((\mathbf{u}')^T, 0, \dots, 0\big)^T$ with eigenvalue $s^{-(d-1)}$  and  acts as multiplication by $s$ on the span of $\mathbf{e}_3,\dots, \mathbf{e}_d$ and $(\mathbf{u}^T, 0, \dots, 0)^T$; in particular, we have $h_s \mathbf{v}= s \mathbf{v}$. Finally, since $s I_d$ lies in the center of $\GL_d(\R)$ and $h'_s$ commutes with $F$, we have $$h_s g_t = (s h'_s) g_t = g_t (s h'_s) = g_t h_s$$ for all $t \ge 0$. Thus $h_s \in Z$, and the proof is completed. \end{proof}

Our next observation is the continuity of the function $\lambda^F_\eta$ %is continuous 
along the orbits of $Z$.

\begin{lemma}\label{ZF} For any $x\in X_d$ the function \eq{contfcn}{z\mapsto \lambda^F_\eta(zx),\ Z\to \R,} is continuous. 
%\lambda^F_\eta(x) > 0$ and    any non-empty open interval $I\ni\lambda^F_\eta(x)$ there exists an open neighborhood $B^0$ of identity in $Z$ such that $\lambda^F_\eta(zx) \in I$ for any $z\in B^0$. 
\end{lemma}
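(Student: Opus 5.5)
The key observation is that elements of $Z$ commute with every $g_t$, so for $z\in Z$ we have $g_t(zx) = z(g_tx)$. The function $\lambda^F_\eta(zx)$ therefore compares $\delta_\eta(z\,g_tx)$ with $\delta_\eta(g_tx)$, and this comparison is uniform in $t$. The plan is to prove a bi-Lipschitz-type estimate for $\delta_\eta$ under a fixed linear map, and then let $z\to z_0$.

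\textbf{Step 1 (distortion bound).} For any $h\in \GL_d(\R)$ set $\|h\|_\eta := \sup_{\eta(\mathbf{w})=1}\eta(h\mathbf{w})$, the operator norm with respect to $\eta$. For any lattice $y$ and any $h\in G$ we have $\eta(h\mathbf{w})\le \|h\|_\eta\,\eta(\mathbf{w})$ for all $\mathbf{w}\in y\nz$. Taking infima gives $\delta_\eta(hy)\le \|h\|_\eta\,\delta_\eta(y)$. Applying the same inequality with $h^{-1}$ and the lattice $hy$ yields
\[
\|h^{-1}\|_\eta^{-1}\,\delta_\eta(y)\le \delta_\eta(hy)\le \|h\|_\eta\,\delta_\eta(y).
\]

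\textbf{Step 2 (pass to the liminf).} Fix $z_0\in Z$ and let $z\in Z$. Write $zx = (zz_0^{-1})(z_0x)$ and put $h := zz_0^{-1}\in Z$. Since $h$ commutes with $g_t$, we get $g_t(zx) = h\,g_t(z_0x)$. Step 1 then gives, for every $t\ge 0$,
\[
\|h^{-1}\|_\eta^{-d}\,\delta_\eta(g_tz_0x)^d\le \delta_\eta(g_tzx)^d\le \|h\|_\eta^{d}\,\delta_\eta(g_tz_0x)^d.
\]
The constants do not depend on $t$, so taking $\liminf_{t\to\infty}$ preserves the inequalities. This gives
\[
\|h^{-1}\|_\eta^{-d}\,\lambda^F_\eta(z_0x)\le \lambda^F_\eta(zx)\le \|h\|_\eta^{d}\,\lambda^F_\eta(z_0x).
\]

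\textbf{Step 3 (continuity).} As $z\to z_0$ in $Z$, we have $h\to I_d$. Operator norms are continuous and inversion is continuous, so $\|h\|_\eta\to 1$ and $\|h^{-1}\|_\eta\to1$. Since $\lambda^F_\eta(z_0x)$ is a finite number (the function $\delta_\eta$ is bounded), the two-sided bound forces $\lambda^F_\eta(zx)\to\lambda^F_\eta(z_0x)$. This argument also covers the case $\lambda^F_\eta(z_0x)=0$, and it requires no boundedness assumption on the trajectory.

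There is no real obstacle here. The only point requiring care is the use of commutation: it is exactly what makes the distortion constant independent of $t$. For a general $h\in G$ this would fail, because $g_th g_{-t}$ can be unbounded.
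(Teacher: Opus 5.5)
Your proof is correct and follows the paper's argument essentially verbatim: you use the two-sided operator-norm bound $\|h^{-1}\|^{-1}\delta_\eta(y)\le\delta_\eta(hy)\le\|h\|\,\delta_\eta(y)$, apply it to $y=g_tx$ using that $Z$ commutes with $g_t$, pass to the $\liminf$ of $d$-th powers, and let the norms tend to $1$. The only cosmetic difference is that you handle an arbitrary base point $z_0$ explicitly via $h=zz_0^{-1}$, whereas the paper reduces to continuity at the identity, which is the same argument applied to the lattice $z_0x$.
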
 

\begin{proof} Clearly it suffices to prove continuity at the identity element of $Z$. 
For $g\in G$ denote by $\|g\|$ its operator norm  of $g$ (as a self-map of $\R^d$) relative to the norm $\eta$, that is, 
$$\|g\| := \sup_{\mathbf{v}\in\R^d\nz}\frac{\eta(g\mathbf{v})}{\eta(\mathbf{v})}.$$
Thus  for any $g\in G$ we have the   bounds
\[
\|g^{-1}\|^{-1} \eta(\mathbf{v}) \le \eta(g\mathbf{v}) \le \|g\| \eta(\mathbf{v}) \quad \text{for all } \mathbf{v} \in \R^d.
\]
Taking  infimum over all non-zero  vectors of a lattice $x\in X_d$ yields
\[
\|g^{-1}\|^{-1} \delta_\eta(x) \le \delta_\eta(g x) \le \|g\| \delta_\eta( x).
\]
Applying the above estimates with $g = z\in Z$ and with $x$ replaced by $g_tx$, we obtain
%\[
%\|g^{-1}\|^{-1} \delta_\eta(g_tx) \le \delta_\eta(g g_tx) \le \|g\| \delta_\eta( g_tx).
%\]
%When $g\in Z$, the above can be rewritten as 
\[
\|z^{-1}\|^{-1} \delta_\eta(g_tx) \le  \delta_\eta(z g_tx) =  \delta_\eta(g_tz x)   \le \|z\| \delta_\eta( g_tx),
\]
and  taking  the limit inferior of the $d$-th powers of both sides  as $t \to \infty$ produces
\[
\|z^{-1}\|^{-d} \lambda^F_\eta(x) \le \lambda^F_\eta(zx) \le \|z\|^d \lambda^F_\eta(x).
\]
Since the map $g \mapsto \max(\|g\|^d, \|g^{-1}\|^{d})$ is continuous on $G$ and evaluates to $1$ at the identity element, 
the continuity of \equ{contfcn}
%the function $z\mapsto \lambda^F_\eta(zx)$ 
follows. (As a byproduct we can also see that the set $\left\{x\in X_d: \lambda^F_\eta(x) = 0\right\}$ is $Z$-invariant.)
%for any $\varepsilon > 0$ there exists an open neighborhood $B^0$ of identity in $Z$ such that 
%\(
%\big| \lambda^F_\eta(zx) - \lambda^F_\eta(x) \big| < \varepsilon 
%\) for all $z \in B^0$.
%Choosing $\varepsilon > 0$ such that $(c - \varepsilon, c + \varepsilon) \subset I$, we conclude that 
%$\lambda^F_\eta(zx) \in (c - \varepsilon, c + \varepsilon) \subset I$ for all $z \in B^0$.
\end{proof}

Now we come to the crucial step of the proof: using $h_s$ defined in the previous lemma to continuously lower the value of the dynamical Lagrange constant of a lattice.

\begin{proposition} \label{movingdown} Let $x\in X_d$ be such that $\lambda^F_\eta(x) > 0$,  let $ \mathbf v$ be a vector satisfying  \equ{apprlattices} %that was 
constructed in Proposition \ref{equiv2}, and let $h_s$ be as in Lemma \ref{commuting}. Then for any $0 < s \le 1$ there exists $s'\in [s,1]$ such that  
\eq{lagrangeequality}{\lambda^F_\eta(h_{s'}x)  = s^d\lambda^F_\eta(x).}
\end{proposition}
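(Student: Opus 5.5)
The natural route is an intermediate value argument for the function $\phi(s) := \lambda^F_\eta(h_s x)$ on $[s,1]$. By Lemma \ref{ZF} (applied with $z = h_s \in Z$, and using that $s\mapsto h_s$ is continuous), $\phi$ is continuous on $(0,1]$, and $\phi(1) = \lambda^F_\eta(x)$ since $h_1 = I_d$. It therefore suffices to show
\[
\phi(s) \le s^d\lambda^F_\eta(x) \le \phi(1);
\]
the intermediate value theorem then produces $s'\in[s,1]$ with $\phi(s') = s^d\lambda^F_\eta(x)$, which is \equ{lagrangeequality}. The right inequality is trivial because $s\le1$.

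For the left inequality $\lambda^F_\eta(h_s x)\le s^d\lambda^F_\eta(x)$, I would use the data from Proposition \ref{equiv2}: times $t_k\to\infty$, shortest vectors $\mathbf v_k\in g_{t_k}x$, and the limits $g_{t_k}x\to x_\infty$, $\mathbf v_k\to\mathbf v$ with $\eta(\mathbf v)^d = \lambda^F_\eta(x)$. Here $h_s$ is the map built in Lemma \ref{commuting} from this very $\mathbf v$, so $h_s\mathbf v = s\mathbf v$. Since $h_s$ commutes with $g_t$, the vectors $h_s\mathbf v_k$ lie in $h_s g_{t_k}x = g_{t_k}h_s x$ and are nonzero. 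Moreover $h_s\mathbf v_k\to h_s\mathbf v = s\mathbf v$, so $\eta(h_s\mathbf v_k)^d\to s^d\eta(\mathbf v)^d = s^d\lambda^F_\eta(x)$. By Proposition \ref{equiv1}, $\lambda^F_\eta(h_s x)$ is at most any $c$ satisfying \equ{apprvectors} for the lattice $h_s x$, and this gives the desired bound.

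The only delicate point is that the inequality comes from the limit vector $\mathbf v$, not from the $\mathbf v_k$ themselves. The $\mathbf v_k$ need not lie in the eigenspace $V$, so $h_s$ does not simply scale them. They converge to $\mathbf v\in V$, however, and $h_s$ is continuous, so the limit behaves correctly; this is exactly why Proposition \ref{equiv2} (which supplies convergence of the vectors, using boundedness of $Fx$) is used instead of Proposition \ref{equiv1} alone. No lower bound on $\phi(s)$ is needed, since only the two endpoint comparisons enter the intermediate value argument.
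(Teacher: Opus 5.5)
Your proposal is correct and follows the paper's proof step for step. You push the convergent vectors $\mathbf v_k\to\mathbf v$ from Proposition \ref{equiv2} through $h_s$ and apply Proposition \ref{equiv1} to get $\lambda^F_\eta(h_sx)\le s^d\lambda^F_\eta(x)$, then conclude from continuity of $\phi(s)=\lambda^F_\eta(h_sx)$ (Lemma \ref{ZF}), $\phi(1)=\lambda^F_\eta(x)$, and the intermediate value theorem.
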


\begin{proof} %First, note that, since $h_s \in Z$, we have $g_t (h_s x) = h_s (g_t x)$ for all $t \ge 0$. 
By Proposition \ref{equiv2}, since $\lambda^F_\eta(x)  > 0$, there exist sequences $t_k \to \infty$, $\mathbf{v}_k \in g_{t_k}x \nz$, $x_\infty \in X_d$, and $\mathbf{v} \in x_\infty \nz$ such that
\[
g_{t_k}x \to x_\infty, \quad \mathbf{v}_k \to \mathbf{v}, \quad \text{and} \quad \delta_\eta(x_\infty)^d = \eta(\mathbf{v})^d = \lambda^F_\eta(x).
\]
%where each $\mathbf{v}_k$ is a shortest non-zero vector of $g_{t_k}x$.
Then for any $0 < s \le 1$ consider the lattice $h_s x \in X_d$. Along the same time sequence $t_k$, we have
\[
g_{t_k}(h_s x) \underset{h_s \in Z}= h_s (g_{t_k} x) \to h_s x_\infty \quad \text{as } k \to \infty.
\]
%Define $\mathbf{w}_k := h_s \mathbf{v}_k \in g_{t_k}(h_s x)$. Since $h_s$ is a continuous linear operator on $\R^d$,
Moreover, the sequence of  vectors $h_s \mathbf{v}_k \in  g_{t_k}h_sx$ satisfies 
\[
%\mathbf{w}_k = 
h_s \mathbf{v}_k \to h_s \mathbf{v} \underset{\text{Lemma \ref{commuting}}}= s \mathbf{v} \quad \text{as } k \to \infty,
\]
%where we used the key property $h_s \mathbf{v} = s \mathbf{v}$ from Lemma \ref{commuting}. 
hence $$\lim_{k \to \infty} \eta(h_s\mathbf{v}_k)^d = s^d\eta(\mathbf{v})^d = s^d\lambda^F_\eta(x).$$
This, in view of Proposition \ref{equiv1}, implies that $\lambda^F_\eta(h_sx)  \le s^d\lambda^F_\eta(x)$. 

Let $\phi(s) := \lambda^F_\eta(h_sx)$. 
Recall that the map  $s\mapsto h_s$ constructed in  Lemma \ref{commuting} is continuous, and hence, in view of Lemma \ref{ZF}, so is $\phi$.
%the map  $s\mapsto \lambda^F_\eta(h_sx)$. 
Also $h_1$ is the identity element, implying that $\phi(1) = \lambda^F_\eta(x)$. Hence, by the Intermediate Value Theorem, there exists $s'\in [s,1]$ such that  \equ{lagrangeequality} holds.
\end{proof}

Now we are ready to proceed with the 
\begin{proof}[Proof of Theorem  \ref{thm:full_spectrum}]
In view of the ergodicity of the $F$-action on $(X_d,\mu)$ for   $\mu$-a.e.\ lattice $x \in X_d$  the trajectory $Fx$ is unbounded, giving $\lambda^F_\eta(x) = 0$; hence $0 \in \mathbb{L}^F_\eta$. Also, by 
Proposition \ref{topspectrum} there exists $x_0\in X_d$ such that 
%definition $L^F_\eta = \sup \mathbb{L}^F_\eta < \infty$, and there exists a sequence of lattices $x_k \in X_d$ such that 
$\lambda^F_\eta(x_0) = L^F_\eta$.

Let $c \in (0, L^F_\eta)$ be any target value. %By definition of the supremum $L^F_\eta$, we can find a lattice $x_0 \in X_d$ such that $c < \lambda^F_\eta(x_0) \le L^F_\eta$. %Set $c_0 := \lambda^F_\eta(x_0) > 0$.
Define $s := (c / L^F_\eta)^{1/d} \in (0, 1)$. By Lemma \ref{commuting} there exists an element $h_s \in Z$ such that $h_s \mathbf{v} = s \mathbf{v}$, where $\mathbf{v} \in \R^d$ is a shortest vector of a limit lattice $x_\infty$ associated to $x_0$ as in Proposition \ref{equiv2}. Applying Proposition~\ref{movingdown} to the lattice $x_0$, %and the scaling factor $s^d$, 
we obtain
\[
\lambda^F_\eta(h_{s'} x_0) = s^d \lambda^F_\eta(x_0) =  \frac{c}{\lambda^F_\eta(x_0)} \lambda^F_\eta(x_0) = c
\]
for some $s'\in [s,1]$.
Thus  $c \in \mathbb{L}^F_\eta$. Since $c \in (0, L^F_\eta)$ was arbitrary, $(0, L^F_\eta) \subset \mathbb{L}^F_\eta$, and hence $\mathbb{L}^F_\eta = [0, L^F_\eta]$.
\end{proof}

\section{Density of the  spectrum in $U$-orbits}\label{density}
In the previous section we showed that, under the assumptions of Theorem  \ref{thm:full_spectrum}, whenever we are given $x\in X_d$ with $  \lambda^F_\eta(x) > 0$ one can for any   $0 < s \le 1$ construct a lattice $x'\in X_d$ with 
$\lambda^F_\eta(x')  = s^d\lambda^F_\eta(x)$. The next natural  line of inquiry would be to study the magnitude (for example, in terms of \hd)  of the set $\left\{x\in X_d: \lambda^F_\eta(x)  = c\right\}$,  where $0 < c \le L^F_\eta$. This seems to be a difficult problem, and in this section we substitute it with a lighter one: studying the set of lattices $x$ such that its dynamical Lagrange constant is sufficiently close to $c$. Namely, we are going to prove that for any   %$x_0\in X_d$ with bounded $F$-trajectory and any non-empty 
open interval $I$ such that $\mathbb{L}^F_\eta \cap I$ has non-empty interior, %containing $\lambda^F_\eta(x_0)$, 
the set 
\eq{lambdainI}{  \left\{x\in X_d : \lambda^F_\eta(x) \in I\right\}}
is dense in $X_d$. Even stronger than that, we will establish density of the intersection of the set \equ{lambdainI} in orbits of the expanding horospherical subgroup relative to $F$.

We now proceed to the definitions. Let $F = \{g_t: t \ge 0\}$ be a non-trivial diagonal one-parameter subsemigroup of $G$ as in   \equ{diaggt}; in other words, $g_t = \exp\big(t\diag(w_1,\dots,w_d)\big)$. The 
\textsl{expanding horospherical subgroup} relative to $F$
is defined as %\eq{defu}{
 $$U := \{u\in G : g_{-t}ug_t\to e\text{ as }t\to \infty\}.$$
 Similarly one defines the 
\textsl{contracting horospherical subgroup} relative to $F$:
\eq{defUminus}{U^- := \{u\in G : g_{t}ug_{-t}\to e\text{ as }t\to \infty\}.}
Then $G$ is locally the product of $U$, $U^-$ and the centralizer $Z$ of $F$. For example, if we assume, as can always be done without loss of generality,  that $w_i \ge w_j$ whenever $i \le j$, then $U$, $U^-$ and $Z$ consist of upper-triangular, lower-triangular and block-diagonal matrices respectively. 

Note that all these subgroups are non-trivial in view of our assumptions on $F$.
Also it is easy to see that  locally (in a small neighborhood of identity) $G$ is a direct product of $U$, $Z$ and $U^-$.  In other words, one can choose neighborhoods of identity $B$, $B^0$ and $B^-$ in $U$, $Z$ and $U^-$ respectively such that the multiplication map $B \times B^0\times B^-\to G$ (in any order) is a diffeomorphism onto a neighborhood of identity in $G$.

\smallskip

Now we record some simple facts regarding the way the action of the subgroups defined above affects the values of dynamical Lagrange constants $\lambda^F_\eta(x)$. Trivially the function $\lambda^F_\eta$ is invariant under $F$. The next lemma shows that it is also invariant under $U^-$.

\begin{lemma}\label{uminus} For any $x\in X_d$ and any $u\in U^-$ one has $\lambda^F_\eta(ux) = \lambda^F_\eta(x)$. \end{lemma}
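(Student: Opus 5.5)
The plan is to compare $\delta_\eta(g_t u x)$ with $\delta_\eta(g_t x)$ directly, using the same operator-norm estimate that appeared in the proof of Lemma \ref{ZF}. First, for every $t\ge 0$ I will write
\[
g_t u x = (g_t u g_{-t})\, g_t x,
\]
and set $u_t := g_t u g_{-t}\in G$. By the definition \equ{defUminus} of $U^-$, $u_t\to e$ as $t\to\infty$. Consequently both $\|u_t\|$ and $\|u_t^{-1}\|$ tend to $1$, where $\|\cdot\|$ is the operator norm relative to $\eta$ introduced in the proof of Lemma \ref{ZF}.

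Next, I will apply the inequality $\|g^{-1}\|^{-1}\delta_\eta(y)\le\delta_\eta(gy)\le\|g\|\delta_\eta(y)$ from that proof, with $g=u_t$ and $y=g_tx$. This gives
\[
\|u_t^{-1}\|^{-d}\,\delta_\eta(g_tx)^d \le \delta_\eta(g_tux)^d \le \|u_t\|^{d}\,\delta_\eta(g_tx)^d .
\]

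Finally, I will take the limit inferior as $t\to\infty$. The multiplicative factors converge to $1$, and $\delta_\eta$ is bounded on $X_d$, as noted after \equ{defdelta}. Hence the liminf of a product of a convergent positive factor and a bounded nonnegative function equals the liminf of that function. This yields $\lambda^F_\eta(ux)=\lambda^F_\eta(x)$. The case $\lambda^F_\eta(x)=0$ is covered by the same inequalities and needs no separate treatment.

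There is no real obstacle. The only point requiring care is the last step: passing the factors $\|u_t\|^{\pm d}\to 1$ through the liminf, which is legitimate precisely because $\delta_\eta$ is bounded.
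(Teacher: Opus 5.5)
Your proof is correct and is essentially the paper's argument. The paper also writes $g_tux=(g_tug_{-t})g_tx$, sandwiches $\delta_\eta(g_tux)$ between operator-norm factors that tend to $1$, and uses the boundedness of $\delta_\eta$ to get $|\delta_\eta(g_tux)^d-\delta_\eta(g_tx)^d|\to 0$ (your version of the sandwich, with $\delta_\eta(g_tx)$ as the comparison term, is the right one; the paper's display writes $\delta_\eta(ux)$ there, which is evidently a slip).
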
 

\begin{proof}
Let $x \in X_d$ and $u \in U^-$.  For any $t\ge 0$, writing $
g_t ux = (g_t u g_{-t}) g_t x 
$,   one has the two-sided estimate
$$
\|(g_t u g_{-t})^{-1}\|^{-1}\delta_\eta(ux) \le\delta_\eta(g_tux) \le \|g_t u g_{-t}\| \delta_\eta(ux),$$
where $\|\cdot\|$ again stands for the operator norm   relative to the norm $\eta$.
%Writing $
%g_t ux = (g_t u g_{-t}) g_t x 
%$ and u
Using \equ{defUminus}, we conclude that both  $\|(g_t u g_{-t})^{-1}\|^{-1}$ and $\|g_t u g_{-t}\| $  tend to 1. Since 
%the distance (in any compatible Riemannian metric on $X_d$) between $g_t ux $ and $g_t x$ vanishes as $t \to \infty$. 
$\delta_\eta$ is globally bounded, this implies   
\[
\left| \delta_\eta(g_tux)^d - \delta_\eta(g_t x)^d \right| \to 0 \quad \text{as } t \to \infty.
\]
 %Hence the difference of the d-th powers
%tends to zero, and the two liminfs agree
%The uniform continuity of $\log \delta_\eta$, see e.g.\ \cite[\S 7]{KM99}, %on compact sets 
%then implies that 
%Taking the limit inferior along both trajectories as $t \to \infty$, we obtain
Hence
\[
\lambda^F_\eta(ux) = \liminf_{t \to \infty} \delta_\eta(g_tux)^d = \liminf_{t \to \infty} \delta_\eta(g_t x)^d = \lambda^F_\eta(x)
\]
for all $x \in X_d$ and $u \in U^-$.
\end{proof}

We now focus our attention on the \ehs\ $U$. 
The key fact about it that we are going to use is that the $g_t$-translates of $U$-orbits get equidistributed in $X_d$ as $t\to\infty$. This is a well-known consequence of mixing of the $F$-action on $(X_d,\mu)$, an idea dating back to the Ph.D.\ Thesis of Margulis \cite{M}, see also \cite{KM}. Specifically the following proposition will be useful:

\begin{proposition} \label{equidistr} 
For any  $x \in X_d$ and any bounded non-empty open sets $V \subset U$ and $Q \subset X_d$  %of positive Haar measure and 
with $\mu(\partial Q) =0$    there exists $t_0 >0$ such that $g_tVx \cap Q\ne \varnothing$ for all $t \ge t_0$. 
\end{proposition}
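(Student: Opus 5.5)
We will deduce Proposition~\ref{equidistr} from the classical equidistribution of expanding translates of $U$-orbits. That fact is a consequence of mixing of the $F$-action on $(X_d,\mu)$, via Margulis' thickening argument \cite{M,KM}. It says: for any $x\in X_d$, any nonnegative $f\in C_c(U)$ with $\int_U f\,d\nu=1$ (with $\nu$ a Haar measure on $U$), and any $\varphi\in C_c(X_d)$,
\[
\int_U f(u)\,\varphi(g_tux)\,d\nu(u)\to\int_{X_d}\varphi\,d\mu \quad\text{as } t\to\infty .
\]
We take this as the input, since the paper treats it as well known. If it had to be proved, we would do the following. Thicken the piece of $U$-orbit $\operatorname{supp}(f)x$ in the $ZU^-$ directions by a small neighbourhood $B^0B^-$, using the local product structure $B\times B^0\times B^-$ described above. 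Apply mixing, i.e.\ $\int \psi\cdot\varphi\circ g_t\,d\mu\to\int\psi\,d\mu\int\varphi\,d\mu$, to a bump function $\psi$ supported on this thickened set. Then use that $g_t$ conjugates $B^0B^-$ into a set of uniformly bounded size (it fixes $Z$ and contracts $U^-$), together with uniform continuity of $\varphi$, to transfer the mixing statement back to the $U$-orbit.

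Now fix $x$, $V$ and $Q$ as in the statement. Since $V$ is non-empty and open in $U$, choose a nonnegative continuous $f$ supported inside $V$ with $\int f\,d\nu=1$. Since $Q$ is non-empty and open, and $\mu$ has full support because it is a Haar measure on the homogeneous space, we have $\mu(Q)>0$. Choose a compact set $K\subset Q$ with $\mu(K)>\mu(Q)/2$, and a function $\varphi\in C_c(X_d)$ with $0\le\varphi\le 1$, $\varphi=1$ on $K$, and $\operatorname{supp}\varphi\subset Q$, using Urysohn's lemma. Then $\int\varphi\,d\mu\ge\mu(K)>0$. Note that the hypothesis $\mu(\partial Q)=0$ is not needed for this one-sided (non-emptiness) conclusion; it would only matter for the quantitative version $\nu(\{u\in V: g_tux\in Q\})\to\nu(V)\mu(Q)$.

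By the equidistribution statement there is $t_0$ such that for all $t\ge t_0$,
\[
\int_U f(u)\,\varphi(g_tux)\,d\nu(u)>\tfrac12\int\varphi\,d\mu>0 .
\]
Hence for each such $t$ there is some $u\in\operatorname{supp} f\subset V$ with $\varphi(g_tux)>0$. For that $u$ we get $g_tux\in\operatorname{supp}\varphi\subset Q$, that is, $g_tVx\cap Q\neq\varnothing$.

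The main obstacle is the equidistribution input itself, specifically the thickening step. There one must check that the error introduced by replacing the $U$-orbit with its $B^0B^-$-thickening is uniformly small in $t$. This uses only the fact that the $Z$-component commutes with $g_t$ and so stays of fixed size under conjugation, while the $U^-$-component shrinks. The choice of the sets and the final extraction of a point are routine.
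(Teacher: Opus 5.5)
Your proof is correct and follows essentially the paper's route: deduce the statement from equidistribution of expanding translates of $U$-orbits, a consequence of mixing. The paper instead quotes the measure estimate $\nu(\{u\in V: g_tux\in Q\})>\nu(V)\mu(Q)-\varepsilon$ from \cite[Proposition 2.4]{Kl}, with $L=\{x\}$ and $0<\varepsilon<\nu(V)\mu(Q)$; you work with continuous test functions $f$ supported in $V$ and $0\le\varphi\le 1$ supported in $Q$, which, as you rightly note, makes the hypothesis $\mu(\partial Q)=0$ unnecessary for this one-sided conclusion.
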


\begin{proof} One can for example use \cite[Proposition 2.4]{Kl} that
%, under the assumption that the $g_t$-action is mixing  and 
for any $V,Q$ as above, any compact subset $L$ of $X_d$  and any $\varepsilon > 0$ establishes the existence of $t_0 := t_0(V, Q, L, \varepsilon) >0$ such that for all $t > t_0$ and any base point $x \in L$ one has 
\begin{equation*}
    \nu\left(\{ u \in V : g_tux \in Q\}\right) > \nu(V) \mu(Q) - \varepsilon,
\end{equation*}
where $\nu$ is a Haar measure on $U$. The desired  result   immediately follows from the above measure estimate by taking $L = \{x\}$ and  $0 < \varepsilon < \nu(V) \mu(Q)$.\end{proof}

Now we are ready to state and prove a more general version of Theorem~\ref{thm:density}.

\begin{theorem}\label{thm:dynamical_density} Let $\eta$   and  
%$F$ be a diagonal one-parameter  subsemigroup of $G$ given by 
$F$ be as in Theorem~\ref{thm:full_spectrum},  let $x_0\in X_d$ be such that $\lambda^F_\eta(x_0) > 0$, and let $U$ be the \ehs\ relative to $F$. 
%\begin{itemize}
%\item[\rm (i)] $w_i \ne 0$ for all $i$, and 
%\item[\rm (ii)]  
Then for any $x\in X_d$ and any non-empty open interval $I$ containing $\lambda^F_\eta(x_0)$, the set 
$$  \{u\in U : \lambda^F_\eta(ux) \in I\}$$
is dense in $U$. Consequently for any non-empty open subset $B$ of $U$ and any $x\in X_d$ the restricted dynamical  \ls\ $
\mathbb{L}^F_\eta(Bx)$ is dense in $[0,L^F_\eta]$.
\end{theorem}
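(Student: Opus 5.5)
The plan is a shadowing argument. Using the equidistribution of $g_t$-translates of pieces of $U$-orbits (Proposition \ref{equidistr}), we bring $g_tux$ very close to $x_0$. We then absorb the small discrepancy into three pieces: a tiny correction in $U$, which is pulled back by $g_{-t}$ and so becomes even tinier; a tiny element of $Z$, controlled by Lemma \ref{ZF}; and an element of $U^-$, which is harmless by Lemma \ref{uminus}.

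\emph{Step 1 (choice of target set).} Fix $x$, a non-empty open $V\subset U$, and an open interval $I\ni\lambda^F_\eta(x_0)$. By Lemma \ref{ZF} applied at the base point $x_0$, choose a small neighborhood $B^0$ of the identity in $Z$ such that $\lambda^F_\eta(zx_0)\in I$ for all $z\in \overline{B^0}$. Choose small neighborhoods $B\subset U$ and $B^-\subset U^-$, with compact closures and null boundaries (e.g.\ small balls in exponential coordinates), so that $(u,z,u^-)\mapsto uzu^-$ is a diffeomorphism of $B\times B^0\times B^-$ onto a neighborhood of identity. Then set
$$Q:=BB^0B^-x_0 .$$
Taking these sets small enough that the orbit map $g\mapsto gx_0$ is injective near $e$, $Q$ is a bounded open set with $\mu(\partial Q)=0$, since its boundary is the diffeomorphic image of a null set.

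\emph{Step 2 (shadowing).} Shrink $V$ to a bounded open $V'$ with $V'\cdot W\subset V$ for some neighborhood $W$ of $e$ in $U$; any open subset suffices for density. By Proposition \ref{equidistr}, for all large $t$ there is $u\in V'$ with $g_tux\in Q$, i.e.
$$g_tux=u^+zu^-x_0\quad\text{with }u^+\in B,\ z\in B^0,\ u^-\in B^-.$$
Put $u'':=\big(g_{-t}(u^+)^{-1}g_t\big)u$. Since $U$ is expanding, conjugation by $g_{-t}$ contracts $U$ uniformly on the bounded set $B$, so for $t$ large $g_{-t}(u^+)^{-1}g_t\in W$. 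Then $u''\in U$, and we get $u''\in V$ because $u''=u\cdot\big(u^{-1}g_{-t}(u^+)^{-1}g_tu\big)$ and conjugation by $u\in V'$ is uniformly continuous on the bounded set $V'$. (One can also simply order the product so that the correction appears on the right; either way it is a small element of $U$.)

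\emph{Step 3 (computing the constant).} Now $g_tu''x=zu^-x_0=(zu^-z^{-1})zx_0$. Here $zu^-z^{-1}\in U^-$ because $Z$ normalizes $U^-$. Using $F$-invariance of $\lambda^F_\eta$ and Lemma \ref{uminus}, we get
$$\lambda^F_\eta(u''x)=\lambda^F_\eta(g_tu''x)=\lambda^F_\eta(zx_0)\in I,$$
which proves density.

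\emph{Consequence.} Given $c\in(0,L^F_\eta]$, Theorem \ref{thm:full_spectrum} gives $x_0$ with $\lambda^F_\eta(x_0)=c$. Applying the first part with arbitrarily small $I\ni c$ shows that $\mathbb{L}^F_\eta(Bx)$ meets every neighborhood of every point of $(0,L^F_\eta]$, so its closure contains $[0,L^F_\eta]$. The reverse inclusion is trivial.

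The main obstacle I expect is the bookkeeping in Steps 1–2. First, $Q$ must be a genuine open set with null boundary; this relies on local injectivity of the orbit map at $x_0$, which holds because $\Gamma$ is discrete. Second, the $U$-correction must be shown to land back inside $V$ uniformly in $t$. Both are standard but need care about the order of factors in the local product decomposition.
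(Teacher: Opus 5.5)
Your proposal is correct and follows essentially the same shadowing argument as the paper. Both use the target set $Q=BB^0B^-x_0$ with null boundary and Proposition~\ref{equidistr} to bring $g_tux$ into $Q$. Both pull the $U$-correction back by conjugation with $g_{-t}$, and both conclude via $F$-invariance, Lemma~\ref{uminus}, and $zu^-z^{-1}\in U^-$. The only difference is bookkeeping: the paper sets up $V_0h_0\subset V$, $NW\subset V_0$ and the base point $h_0x$ so that the left-multiplied correction lands in $V$ directly, while you conjugate the correction through the relatively compact set $V'$. Both work.
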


\begin{proof}
Let $x_0 \in X_d$ with $\lambda^F_\eta(x_0) > 0$, and let $I \subset \R_{>0}$ be an open interval containing $\lambda^F_\eta(x_0)$.
We know from  Lemma~\ref{ZF} that  $\lambda^F_\eta$ %is continuous 
is   continuous along the orbits of $Z$. Hence there exists an open neighborhood $B^0 \subset Z$ of the identity such that 
\[
\lambda^F_\eta(z x_0) \in I \quad \text{for all } z \in B^0.
\]
By Lemma~\ref{uminus}, $\lambda^F_\eta$ is $U^-$-invariant. Thus, for any open neighborhood $B^- \subset U^-$ of the identity, we have
\[
\lambda^F_\eta(u^- z x_0) = \lambda^F_\eta(z x_0) \in I \quad \text{for all } u^- \in B^-, \, z \in B^0.
\]
Fix a bounded open   neighborhood $B$ of the identity in $U$, and define $$Q := B B^0 B^- x_0.$$ Choosing $B, B^0, B^-$ with piecewise smooth boundary one can ensure that $Q$ is a bounded open neighborhood of $x_0$ in $X_d$ with $\mu(\partial Q) = 0$.

Now let $x \in X_d$, and let $V \subset U$ be an arbitrary non-empty open set. Choose any $h_0 \in V$ and a small open neighborhood $V_0 \subset U$ of the identity such that $V_0 h_0 \subset V$. Also let $W$ be a non-empty open subset of $V_0$ such that $\overline{W} \subset V_0$, and choose a
neighborhood of identity $N \subset U$ with
$NW \subset V_0$.

Since $W \subset U$ and $Q \subset X_d$ are bounded open sets with $\mu(\partial Q) = 0$, Proposition~\ref{equidistr} guarantees that for all  sufficiently large ${t} > 0$ one has 
\[
g_{{t}} W h_0 x \cap Q \ne \varnothing;
\]
that is, there exist $h \in W$ and $x' \in Q$ such that $g_{{t}} h h_0 x = x'$. 

The next step  is based on a shadowing argument that is standard in hyperbolic dynamics (see e.g.\ \cite{EKR} where it was recently used to construct orbits on \hs s that are bounded and approach a given point). Write $x' = u  z u^- x_0$ for some $u  \in B $, $z \in B^0$, and $u^- \in B^-$. Rearranging $g_{{t}} h h_0 x = u  z u^- x_0$, we %multiply by $g_{-{t}}$ on the left to
 obtain 
%\[
%h h_0 x = (g_{-{t}} u  g_{{t}}) g_{-{t}} z u^- x_0,
%\]
%or, equivalently,
\eq{conj}{
g_{{t}}   \left(g_{-{t}} u^{-1} g_{{t}}\right) h h_0   x = z u^- x_0.
}
%Define $u := \big(g_{-{t}} (u^+)^{-1} g_{{t}}\big) h h_0 \in U$. 
Because the conjugation by $g_{{-t}}$ contracts $U$ as $t \to \infty$,
% (i.e., $g_{-{t}} (u^+)^{-1} g_{{t}} \to e$), 
by choosing ${t}$ sufficiently large %$g_{-{t}} u^{-1} g_{{t}}$ can be made arbitrarily close to the identity in $U$. In particular, 
we can achieve $g_{-{t}} B^{-1} g_{{t}}\subset N$, hence
$
\big(g_{-{t}} u^{-1} g_{{t}}\big) h \in V_0 
$
uniformly in $u \in B$ and $h \in W$. The latter implies that $$u' := \big(g_{-{t}} u^{-1} g_{{t}}\big) h h_0 \in V_0 h_0 \subset V.$$
Applying $g_{{t}}$ and using the $F$-invariance of $\lambda^F_\eta$ together with Lemma~\ref{uminus}, we obtain
\[
\lambda^F_\eta(u' x) = \lambda^F_\eta(g_{{t}} u' x) \underset{\equ{conj}}= \lambda^F_\eta(z u^- x_0) = \lambda^F_\eta\big((z u^- z^{-1}) zx_0\big) = \lambda^F_\eta(z x_0) \in I,
\]
where %$z' = (u^-)^{-1} z u^- \in B^0$. 
the last equality follows from the fact that $Z$ normalizes $U^-$, hence the multiplication of $z x_0$ by $z u^- z^{-1}\in U^-$ does not change the value of $\lambda^F_\eta$. Thus we have exhibited $u' \in V$ with $\lambda^F_\eta(u' x) \in I$, completing the density argument.

In particular, we have shown that for any non-empty open set $B \subset U$ and any $x\in X_d$   the restricted dynamical \ls\  $$\mathbb{L}^F_\eta(B x) = \left\{\lambda^F_\eta(u x) : u \in B\right\}$$ contains values in $I$. Since $I$ was chosen to be an arbitrary neighborhood of any non-zero value in $
\mathbb{L}^F_\eta$, %and since $0 \in {\mathbb{L}^F_\eta(B x)}$ in view of the density of  $g_t$-trajectories of $u x$ for $\nu$-a.e.\ $u\in U$, 
it follows that $\mathbb{L}^F_\eta(B x)$ is dense in $[0, L^F_\eta]$.
\end{proof}

  It remains to notice that Theorem \ref{thm:dynamical_density} readily implies Theorem \ref{thm:density}, because $U$ as in \equ{defU} is the \ehs\ relative to $F$ as in \equ{defF}, and the assumption $\max(m,n) > 1$ guarantees that at least two eigenvalues of $g_t$ %as in \equ{defF} 
  coincide. Finally, as was mentioned before, combining Theorem \ref{thm:full_spectrum} and Theorem \ref{thm:density} yields Theorem \ref{thm1}.
  
  \section{Open questions}\label{open}
  \subsection{%Is $L_{\rho,\sigma}$ in ${\mathbb{L}_{\rho,\sigma}}$?
  The top value of the \ls}\label{top} The proof of Proposition \ref{topspectrum} does not extend to show that for a subset $Y\subset X_d$ the restricted {dynamical Lagrange spectrum}   $
\mathbb{L}^F_\eta(Y) := \left\{\lambda^F_\eta(x): x\in Y\right\}
$
contains its supremum: indeed, there is no reason for the limit lattice $x_0 \in X_d$ constructed in the course of the proof  to be an element of $Y$. In particular, when $\max(m,n) > 1$ it is an open question whether or not there exist norms $\rho$ on $\R^m$ and $\sigma$ on $\R^n$ such that $L_{\rho,\sigma}\in{\mathbb{L}_{\rho,\sigma}}$. 

 \subsection{Approximation with weights}\label{weights} It is tempting to apply the methods of this paper to \da\ with weights. Recall that, given an $m$-tuple 
$\mathbf{a} = (a_1, \ldots, a_m)$ and an $n$-tuple $\mathbf{b} = (b_1, \ldots, b_n)$  of positive real numbers satisfying $\sum\limits_{i=1}^m a_i = \sum\limits_{j=1}^n b_j = 1$, one defines quasi-norms 
$
\|\vx\|_{\va}:=\max_i|x_i|^{1/a_i} \quad\textrm{and}\quad \|\vy\|_{\vb}:=\max_j|y_j|^{1/b_j}
$
on $\R^m$ and $\R^n$ respectively. Then one says that \amr\ is \textsl{$(\va,\vb)$-badly approximable} if its weighted Lagrange constant $$\lambda_{\va,\vb}({A}) := \liminf_{ \mathbf{q}\to \infty} \|\mathbf{q}\|_\vb \min_{\vp\in\Z^m} \|{A}\mathbf{q}-\vp\|_\va $$
is positive. The choice of equal weights $\mathbf{a} = (1/m, \ldots, 1/m)$ and $\mathbf{b} = (1/n, \ldots, 1/n)$ yields the standard set-up of \da\ with respect to the supremum norms. A correspondence between approximation with weights $\mathbf{a}$, $\mathbf{b}$ and bounded trajectories of the semigroup  $${F^{\va,\vb} = \left\{g^{\va,\vb}_t: t \ge 0\right\} \subset G, \quad\text{where  }g^{\va,\vb}_t = \diag\left( e^{a_1 t},\dots, e^{a_m t}, e^{-b_1 t},\dots, e^{-b_n t}\right),}$$
on $X_d$ is well-known, see \cite{K98}. Theorem \ref{thm:full_spectrum} implies that the dynamical \ls\ with respect to 
$F^{\va,\vb}$ and any norm $\eta$ on $\R^d$ is a closed interval containing zero as long as at least two of the weights (either $a_i,a_j$ or $b_i,b_j$) coincide. However %even in this case 
this argument does not by itself yield a conclusion
%nothing can be derived 
about the weighted  \di\ \ls\  $\left\{\lambda_{\va,\vb}({A}): A\in\mr\right\}$, because it is crucially used in our proof that $U$ as in \equ{defU} is the expanding horospherical subgroup relative to $F$.

 \subsection{No repeated eigenvalues}\label{multone} If $F = \left\{\diag\left( e^{w_1 t},\dots, e^{w_d t}\right):t\ge 0\right\}$ is such that all the values $w_i$ are different, the argument used in \S\ref{dynlagr} to produce intermediate values of dynamical Lagrange constants does not apply.  In particular, when $d > 2$ it is not known whether or not    $\max {\mathbb{L}^F_{\eta}}$ can be an isolated point of ${\mathbb{L}^F_{\eta}}$.

\end{document}